\documentclass[a4, 12pt]{amsart}
\usepackage{amssymb}
\usepackage{amstext}
\usepackage{amsmath}
\usepackage{amscd}
\usepackage{latexsym}
\usepackage{amsfonts}

\theoremstyle{plain}
\newtheorem{thm}{Theorem}[section]
\newtheorem*{thm*}{Theorem}
\newtheorem*{cor*}{Corollary}

\newtheorem{prop}[thm]{Proposition}
\newtheorem{lem}[thm]{Lemma}
\newtheorem{cor}[thm]{Corollary}
\newtheorem{claim}{Claim}
\newtheorem*{claim*}{Claim}

\theoremstyle{definition}

\newtheorem{ex}[thm]{Example}

\theoremstyle{remark}

\numberwithin{equation}{thm}

\def\mod{\rm{mod}}

\def\rank{\rm{rank}}
\def\a{\mathfrak a}

\def\e{\mathrm e}
\def\m{\mathfrak m}

\def\p{\mathfrak p}

\def\Z{\Bbb Z}

\def\H{\rm{H}}

\def\Gr{\mathrm G}

\newcommand{\rma}{\rm{a}}

\newcommand{\rme}{\rm{e}}

\newcommand{\rmH}{\rm{H}}

\newcommand{\fkm}{\mathfrak{m}}

\newcommand{\fkp}{\mathfrak{p}}

\def\depth{\rm{depth}}

\def\Ass{\rm{Ass}}

\def\height{\rm{ht}}

\tolerance=9999

\setlength{\oddsidemargin}{1.1mm}
\setlength{\evensidemargin}{1.1mm}
\setlength{\topmargin}{-0.9cm}
\setlength{\headheight}{0.5cm}
\setlength{\headsep}{1.0cm}
\setlength{\textwidth}{15.6cm}
\setlength{\textheight}{21.5cm}

\begin{document}

\setlength{\baselineskip}{20pt}

\title{Sally modules of rank one}
\author{Shiro Goto}
\address{Department of Mathematics, School of Science and Technology, Meiji University, 1-1-1 Higashi-mita, Tama-ku, Kawasaki 214-8571, Japan}
\email{goto@math.meiji.ac.jp}
\author{Koji Nishida}
\address{Department of Mathematics and Informatics, Graduate School of Science and Technology, Chiba University, 1-33 Yatoi-cho, Inage-ku, Chiba-shi, 263 Japan}
\email{nishida@math.s.chiba-u.ac.jp}
\author{Kazuho Ozeki}
\address{Department of Mathematics, School of Science and Technology, Meiji University, 1-1-1 Higashi-mita, Tama-ku, Kawasaki 214-8571, Japan}
\email{kozeki@math.meiji.ac.jp}
\thanks{{\it Key words and phrases:}
Cohen-Macaulay local ring, Buchsbaum ring, associated graded ring, Rees algebras,
Sally module, Hilbert coefficients.
\endgraf
{\it 2000 Mathematics Subject Classification:}
13H10, 13B22, 13H15.}
\maketitle
\begin{abstract}
The structure of Sally modules of $\fkm$-primary ideals $I$ in a Cohen-Macaulay local ring $(A, \m)$ satisfying the equality $\e_1(I)=\e_0(I)-\ell_A(A/I)+1$  is explored, where $\e_0(I)$ and $\e_1(I)$ denote the first two Hilbert coefficients of $I$.
\end{abstract}
\section{Introduction}
Let $A$ be a Cohen-Macaulay local ring with the maximal ideal $\m$ and $d=\dim A >0$. We assume the residue class field $k=A/\fkm$ of $A$ is infinite. Let $I$ be an $\fkm$-primary ideal in $A$ and choose a minimal reduction $Q=(a_1, a_2, \cdots, a_d)$ of $I$. Let 
$$R = \mathrm{R}(I) := A[It]~~\ \ \operatorname{and}~~\ \ T= \mathrm{R}(Q):= A[Qt]~~\subseteq~~A[t]$$ respectively denote the Rees algebras of $I$ and $Q$, where $t$ stands for an indeterminate over $A$. We put $$R' = \mathrm{R}'(I) := A[It, t^{-1}], ~~~~~\ \ T' = \mathrm{R}'(Q):= A[Qt, t^{-1}],$$
and $$G= \mathrm{G}(I) :=  R'/t^{-1}R'~~\cong~~\bigoplus_{n \geq 0}I^n/I^{n+1}.$$ 
Let $B=T/\m T$ which is the polynomial ring with $d$ indeterminates over the field $k$. Following W. V. Vasconcelos \cite{V}, we then define
 $$\mathrm{S}_Q(I)= IR/IT$$ and call it the Sally module of $I$ with respect to $Q$. We notice that the Sally module $S=\mathrm{S}_Q(I)$ is a finitely generated graded $T$-module, since $R$ is a module-finite extension of the graded ring $T$.

Let $\ell_A(*)$ stand for the length and consider the Hilbert function $$H_I(n)= \ell_A(A/I^{n+1})$$
$(n \geq 0)$  of $I$. Then we have the integers $\{\e_i=\e_i(I)\}_{0 \leq i \leq d}$ so that the equality
$$H_I(n)={\rme}_0\binom{n+d}{d}-{\e_1}\binom{n+d-1}{d-1}+\cdots+(-1)^d{\e}_d$$
holds true for all $n \gg 0$.

The Sally module $S$ was introduced by W. V. Vasconcelos \cite{V}, where he gave an elegant review, in terms of his $Sally$ module, the works \cite{S1, S2, S3}  of J. Sally about the structure of $\m$-primary ideals $I$ with interaction to the structure  of $G$ and Hilbert coefficients $\e_i$'s. J. Sally  firstly investigated those ideals $I$ satisfying the equality $\e_1 = \e_0 - \ell_A(A/I) + 1$ and gave several  very important results, among which one can find the following characterization of ideals $I$ with $\e_1 = \e_0 - \ell_A(A/I) + 1$ and $\e_2 \ne 0$, where $B(-1)$ stands for the graded $B$-module whose grading is given by $[B(-1)]_n = B_{n-1}$ for all $n \in \Z$. 
The reader may also consult with \cite{CPVP} and \cite{VP} for further  ingenious use of Sally modules.

\begin{thm}[Sally \cite{S3} , Vasconcelos \cite{V}]\label{Sally}
The following three conditions are equivalent to each other.
\begin{itemize}
\item[$(1)$] $S \cong B(-1)$ as graded $T$-modules.
\item[$(2)$] $\e_1=\e_0-\ell_A(A/I)+1$ and if $d \geq 2$, $\e_2 \ne 0$.
\item[$(3)$] $I^3 = QI^2$ and $\ell_A(I^2/QI) = 1$.
\end{itemize}
When this is the case, the following assertions hold true.
\begin{itemize}
\item[$(i)$] $\e_2 = 1$, if $d \geq 2$.
\item[$(ii)$] $\e_i = 0$ for all $3 \leq i \leq d$, if $d \geq 3$.
\item[$(iii)$] $\operatorname{depth} G \geq d-1$.
\end{itemize}
\end{thm}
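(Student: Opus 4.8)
The plan is to make the length function of the Sally module the central object. Write $s_n=\ell_A([\mathrm{S}_Q(I)]_n)=\ell_A(I^{n+1}/Q^nI)$ for $n\ge 0$, and begin by establishing the elementary identity
\[
\ell_A(A/I^{n+1})=\e_0\binom{n+d}{d}-(\e_0-\ell_A(A/I))\binom{n+d-1}{d-1}-s_n ,
\]
valid for every $n\ge 0$. I would deduce it from $\ell_A(A/I^{n+1})=\ell_A(A/Q^{n+1})-\ell_A(I^{n+1}/Q^{n+1})$, from $\ell_A(A/Q^{n+1})=\e_0\binom{n+d}{d}$ (here $A$ is Cohen--Macaulay and $Q$ is generated by a system of parameters), and from the splitting $\ell_A(I^{n+1}/Q^{n+1})=s_n+\ell_A(Q^nI/Q^{n+1})$ together with $\ell_A(Q^nI/Q^{n+1})=(\e_0-\ell_A(A/I))\binom{n+d-1}{d-1}$, the latter because $a_1,\dots,a_d$ is a regular sequence and $\mathrm{G}(Q)\cong B$ is a polynomial ring. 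Comparing the identity with the Hilbert polynomial shows that, for $n\gg 0$, $s_n$ is the Hilbert polynomial of $S$ as a graded $T$-module: it has degree $d-1$ with leading coefficient $c/(d-1)!$, where $c:=\e_1-\e_0+\ell_A(A/I)$. In particular $S=0$ iff $c=0$ iff $I^2=QI$, and when $S\ne 0$ its multiplicity over $T$ is $c$. I would record here the structural facts of Vasconcelos that a nonzero $S$ has $\dim_T S=d$ and $\Ass_T S=\{\m T\}$, so that $S$ has positive depth and contains no nonzero submodule of finite length.

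The two implications out of $(1)$ are then immediate. If $S\cong B(-1)$ then $s_n=\ell_A(B_{n-1})=\binom{n+d-2}{d-1}$ for all $n\ge 1$; substituting into the displayed identity and using $\binom{n+d-1}{d-1}=\binom{n+d-2}{d-1}+\binom{n+d-2}{d-2}$, linear independence of the shifted binomials forces $c=1$, $\e_2=1$ (when $d\ge 2$) and $\e_i=0$ for $3\le i\le d$. This gives $(2)$ as well as assertions $(i)$ and $(ii)$. Moreover $B(-1)$ is generated in degree one with $[B(-1)]_1=k$, so $\ell_A(I^2/QI)=\ell_A([S]_1)=1$, and reading off that $[S]_2=(Qt)\cdot[S]_1$ yields $I^3=QI^2+Q^2I=QI^2$ (as $Q^2I\subseteq QI^2$); this is $(3)$.

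For $(3)\Rightarrow(1)$ I would first show that $I^3=QI^2$ forces $S$ to be generated in degree one: by induction $I^{n+1}=Q^{n-1}I^2$ for $n\ge 2$, so $[S]_n=(Qt)^{n-1}\cdot[S]_1$. Next, $\ell_A(I^2/QI)=1$ means $[S]_1\cong A/\m=k$, so $\m$ annihilates the degree-one generator; since $\m$ commutes with the $T$-action, $\m T$ annihilates all of $S$, and $S$ is a cyclic graded $B$-module generated in degree one, i.e. $S\cong(B/\fka)(-1)$ for a homogeneous ideal $\fka\subseteq B$. Finally $S\ne 0$ has $\dim_T S=d=\dim B$ and $B$ is a domain, whence $\fka=0$ and $S\cong B(-1)$.

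The crux is $(2)\Rightarrow(1)$. Here $c=1$, so $S\ne 0$ is a rank-one, multiplicity-one graded $T$-module with $\Ass_T S=\{\m T\}$ and $S_{\m T}\cong\kappa(\m T)$. The task is to upgrade this to the Cohen--Macaulayness of $S$, after which $S$ is a free rank-one $B$-module concentrated in bottom degree one (because $[S]_1\ne 0$), that is, $S\cong B(-1)$. My plan is to use that $k$ is infinite to pick a generic linear form $x$ (the image of a minimal generator of $Q$) that is $S$-regular, this being possible since $\depth_{\mathfrak M}S\ge 1$, and then to descend to $S/xS$ over a polynomial ring in $d-1$ variables and induct on $d$; the base case $d=1$ is clean, as there $S$ is torsion-free of rank one over a graded principal ideal domain, hence $S\cong B(-a)$, and $[S]_1\ne 0$ forces $a=1$. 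I expect the main obstacle to be exactly the inductive step: cutting by $x$ need not preserve the condition $\Ass=\{\m T\}$, so $S/xS$ may acquire embedded finite-length submodules, and it is precisely here that the hypothesis $\e_2\ne 0$ must enter. Indeed $c=1$ by itself does not imply $(3)$: the rank-one module isomorphic to the graded maximal ideal of $B$ has $c=1$ but $\e_2=0$ and $\ell_A(I^2/QI)=d$, so $\e_2\ne 0$ is what excludes these non-Cohen--Macaulay rank-one Sally modules. Quantitatively I would control $\e_2$ and the lengths $\ell_A(I^{n+1}/QI^n)$ through the Huckaba--Marley inequality $c\le\sum_{n\ge 1}\ell_A(I^{n+1}/QI^n)$, with equality exactly when $\depth\mathrm{G}(I)\ge d-1$; the hypothesis $\e_2\ne 0$ is what forces equality here when $c=1$, simultaneously pinning down $\ell_A(I^2/QI)=1$ and $I^{n+1}=QI^n$ for $n\ge 2$, and yielding assertion $(iii)$.
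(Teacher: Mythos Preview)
Your treatment of $(1)\Rightarrow(2)$, $(1)\Rightarrow(3)$, and $(3)\Rightarrow(1)$ is correct and essentially matches the paper's argument (the paper packages the Hilbert-function identity as Proposition~\ref{proposition2}(2) and the structural facts on $S$ as Lemma~\ref{lemma1} and Proposition~\ref{proposition2}(1),(4)). Assertion~(iii) also follows immediately once $(1)$ is known, since $B(-1)$ is Cohen--Macaulay and Proposition~\ref{proposition2}(4) applies.

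The gap is in $(2)\Rightarrow(1)$. You correctly identify that the inductive step of cutting $S$ by an $S$-regular linear form is problematic---$S/xS$ is not a priori a Sally module, so there is no hypothesis of the form ``$\e_2\ne 0$'' available for it---but your proposed resolution via the Huckaba--Marley inequality is not a proof. The assertion that ``$\e_2\ne 0$ forces equality when $c=1$'' is precisely the content of the implication you are trying to establish: equality in $c\le\sum_{n\ge1}\ell_A(I^{n+1}/QI^n)$ is equivalent to $\depth G\ge d-1$, which is equivalent to $S$ being Cohen--Macaulay, which for a rank-one torsion-free $B$-module with $S_1\ne 0$ is equivalent to $S\cong B(-1)$. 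The Huckaba--Marley inequality by itself gives no independent grip on $\e_2$; you would need a separate inequality linking $\e_2$ to the lengths $\ell_A(I^{n+1}/QI^n)$ \emph{without} already assuming $\depth G\ge d-1$, and none is supplied.

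The paper's route is different and does not use Huckaba--Marley. From $c=1$ one has $\m S=0$ and $\rank_BS=1$, so $S$ is a torsion-free rank-one graded $B$-module. For $d=1$ this already forces $S\cong B(-1)$. For $d=2$ the paper embeds $B(-1)\hookrightarrow S$ via any nonzero element of $S_1$, and reads off from the Hilbert identity that $\e_2=1-\ell_A(C_n)$ for $n\gg0$, where $C$ is the cokernel; the hypothesis $\e_2\ne 0$ together with Narita's theorem $\e_2\ge 0$ then forces the cokernel to vanish. For $d\ge3$ the paper does induct, but at the level of the \emph{ideal}, not of $S$: choose $a_1$ superficial for $I$, pass to $\bar A=A/(a_1)$, observe that condition $(2)$ is inherited by $\bar I$ since $\e_i(\bar I)=\e_i$ for $0\le i\le d-1$, get $\depth \mathrm{G}(\bar I)\ge d-2>0$ by induction, and invoke Sally's technique to conclude that $a_1t$ is $G$-regular, whence $I^3=QI^2$; finally $\e_2\ne 0$ combined with Corollary~\ref{Theorem2} pins down $\ell_A(I^2/QI)=1$. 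The ingredients your sketch is missing are thus Narita's inequality in the base case and the descent along a superficial element of $I$ (rather than a linear form acting on $S$) in the inductive step.
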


The present research is  a continuation of  \cite{S3, V} and aims at similar  understanding of the structure of Sally modules of ideals $I$ which satisfy the equality 
$\e_1=\e_0- \ell_A(A/I)+1$ but $\e_2 = 0$. When $\m S=(0)$, we denote by $\mu_B(S)$ the number of elements in a minimal homogeneous system of generators of the graded $B$-module $S$. Let $$\tilde{I}= \bigcup_{n \geq 1}[I^{n+1} : I^n]=\bigcup_{n \geq 1}[I^{n+1}:(a_1^n, a_2^n, \cdots, a_d^n)]$$ denote the Ratliff-Rush closure of $I$ (cf. \cite{RR}), which is the largest $\m$-primary ideal of $A$ such that $I \subseteq \tilde{I}$ and $$\e_i(\tilde{I}) = \e_i(I)~~\operatorname{for~all}~0 \leq i \leq d.$$ With this notation the main result of this paper is stated as follows.

\begin{thm}\label{MainTheorem}
Suppose that $d \geq 2$. Then the following four conditions are equivalent to each other.
\begin{itemize}
\item[$(1)$] $\m S = (0)$, $\operatorname{rank}_BS =1$, and $\mu_B(S) = 2$. 
\item[$(2)$] There exists an exact sequence 
$$0 \to B(-2) \to B(-1) \oplus B(-1) \to S \to 0$$of graded $T$-modules.
\item[$(3)$] $\e_1 = \e_0 - \ell_A(A/I) + 1$, $\e_2 =0$, and $\operatorname{depth} G \geq d-2$.
\item[$(4)$] $I^3 =QI^2$, $\ell_A(I^2/QI) = 2$, $\m I^2 \subseteq QI$, and $\ell_A(I^3/Q^2I) < 2d$.
\end{itemize}
When  $d=2$, one can add the following condition$\mathrm{:}$
\begin{itemize}
\item[$(5)$] $\ell_A(\tilde{I}/I) = 1$ and $\tilde{I}^2 = Q\tilde{I}$.
\end{itemize}
When one of  conditions $(1), (2), (3)$, and $(4)$ is satisfied, the following assertions hold true
\begin{itemize}
\item[$(i)$] $\operatorname{depth} G = d-2$,
\item[$(ii)$] $\e_3 = -1$, if $d \geq 3$,
\item[$(iii)$] $\e_i = 0$ for all $4 \leq i \leq d$, if $d \geq 4$,
\item[$(iv)$] $\ell_A(I^3/Q^2I) = 2d - 1$,
\end{itemize}
and, when $d=2$ and condition $(5)$ is satisfied, the graded rings $G$, $R$, and $R'$ are all Buchsbaum rings with the same Buchsbaum invariants $$\Bbb{I} (G) = \Bbb{I}(R)= \Bbb{I}(R') = 2.$$
\end{thm}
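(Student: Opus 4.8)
The engine of the whole argument is the defining exact sequence $0 \to IT \to IR \to S \to 0$ of graded $T$-modules together with the identification $[S]_n = I^{n+1}/Q^nI$. First I would record the numerical dictionary this sequence produces. Since $Q$ is a minimal reduction of $I$ in a Cohen--Macaulay ring, $\ell_A(A/Q^{n+1}) = \e_0\binom{n+d}{d}$ and $\ell_A(Q^nI/Q^{n+1}) = (\e_0-\ell_A(A/I))\binom{n+d-1}{d-1}$ for $n\gg 0$, so comparing Hilbert series gives the master identity
$$\sum_{n\ge 0}\ell_A(A/I^{n+1})z^n \equiv \frac{\e_0}{(1-z)^{d+1}}-\frac{\e_0-\ell_A(A/I)}{(1-z)^d}-H_S(z)\pmod{\Z[z]},$$
where $H_S(z)=\sum_{n\ge 1}\ell_A([S]_n)z^n$. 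Reading off coefficients yields $\operatorname{rank}_BS = \e_1-\e_0+\ell_A(A/I)$; in particular $\e_1=\e_0-\ell_A(A/I)+1$ is literally $\operatorname{rank}_BS=1$, and $\m S=(0)$ is exactly the assertion that $S$ is a graded $B=T/\m T$-module. This puts conditions $(1)$ and $(3)$ on the same footing through $H_S$.

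The cleanest implications run from $(2)$. Given the presentation in $(2)$ I would compute $H_S(z)=\frac{2z-z^2}{(1-z)^d}=\frac{1}{(1-z)^d}-\frac{1}{(1-z)^{d-2}}$ and feed it into the master identity: the coefficients give $\e_1=\e_0-\ell_A(A/I)+1$, $\e_2=0$, $\e_3=-1$ (for $d\ge 3$) and $\e_i=0$ for $i\ge 4$, i.e. $(3)$, $(ii)$ and $(iii)$, while the degree one and two pieces give $\ell_A(I^2/QI)=2$ and $\ell_A(I^3/Q^2I)=2d-1$, which is $(iv)$. Since $(2)$ exhibits $\operatorname{pd}_B S=1$, one has $\operatorname{depth}_B S=d-1$, and the standard comparison of the depth of $S$ with that of $G$ (through the fundamental sequence $0 \to S \to R/IT \to G \to 0$ and the modules $\H^i_{\fkM}$) gives $\operatorname{depth} G = d-2$, which is $(i)$. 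In the converse direction $(4)\Rightarrow(1)$ is most transparent: $\m I^2\subseteq QI$ forces $\m S=(0)$; $I^3=QI^2$ forces $S=B\cdot[S]_1$ to be generated in degree one, whence $\mu_B(S)=\dim_k(I^2/QI)=\ell_A(I^2/QI)=2$; and since the kernel of $B(-1)^2\twoheadrightarrow S$ lies inside a free module, the inequality $\ell_A(I^3/Q^2I)<2d$ makes this kernel nonzero in degree two, forcing $\operatorname{rank}_BS\le 1$, while $[S]_1\ne(0)$ gives $\operatorname{rank}_BS\ge 1$.

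The substance of the theorem is to close the cycle by producing the resolution $(2)$ from the softer hypotheses $(1)$ or $(3)$, and this is where the main obstacle lies. Through the dictionary, $\operatorname{depth} G\ge d-2$ is equivalent to $\operatorname{pd}_B S\le 1$; once this is known, rank one together with $\e_2=0$ and the Hilbert data $[S]_0=(0)$, $\dim_k[S]_1=\mu_B(S)$ pins the Betti numbers to $F_0=B(-1)^2$, $F_1=B(-2)$, giving $(2)$. The hard step is to show that $S$ is generated in degree one, equivalently that the reduction number is as small as $I^3=QI^2$: a priori a rank-one Sally module with $\mu_B(S)=2$ could carry its second generator in some degree $\ge 2$ with a lower-dimensional torsion quotient, and ruling this out is the crux. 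I would attack it by analyzing $\operatorname{Ass}_B S$ and the torsion of $S$ via the fundamental sequence above, using the rank-one constraint to force the torsion to vanish and hence $I^3=QI^2$. For $d=2$ this bounding of the reduction number is cheaper, as it can be read from the Ratliff--Rush filtration.

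Finally, for $d=2$ the resolution $(2)$ identifies $S\cong\m_B(-1)$, the twisted maximal ideal of the polynomial ring $B$, and $\operatorname{depth} G=0$. I would prove the equivalence with $(5)$ by comparing the Sally modules of $I$ and of its Ratliff--Rush closure $\tilde{I}$: since $\e_i(\tilde{I})=\e_i(I)$ and $\tilde{I}$ is Ratliff--Rush closed, the condition $\tilde{I}^2=Q\tilde{I}$ places $\tilde{I}$ in the minimal regime where its Sally module vanishes, and $\ell_A(\tilde{I}/I)=1$ is precisely the single step by which $I$ falls short, matching $S\cong\m_B(-1)$. For the Buchsbaum assertion I would compute the graded local cohomology $\H^i_{\fkM}$ of $G$, and then of $R$ and $R'$ through the sequences relating them, showing that $\fkM$ annihilates the lower cohomology and that the resulting invariants satisfy $\Bbb I(G)=\Bbb I(R)=\Bbb I(R')=2$. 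This local-cohomology computation, together with the reduction-number bound above, is the technical heart of the proof.
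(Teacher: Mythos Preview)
Your numerical framework is sound, and you correctly locate the crux: proving $I^3=QI^2$ (equivalently, that $S$ is generated in degree $1$) from either $(1)$ or $(3)$. However, your proposed attack on this step does not work. Since $\operatorname{Ass}_T S\subseteq\{\m T\}$ (Proposition~\ref{proposition2}(1)), the Sally module $S$ is automatically torsion-free over $B$ whenever $\m S=(0)$; there is no torsion to kill, and ``analyzing $\operatorname{Ass}_B S$'' yields nothing beyond this. What you must actually rule out is the possibility $S\cong(f,g)\subseteq B$ with $\deg f=1$ and $\deg g=2$: such an ideal is torsion-free of rank one with $\mu_B=2$, and a short computation shows it still gives $\e_2=0$. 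So neither rank, nor torsion, nor the first two Hilbert coefficients separate it from the target $(X_1,X_2)$, and your claim that ``$\e_2=0$ together with the Hilbert data pins the Betti numbers'' already presupposes generation in degree $1$.

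The paper closes this gap in two moves you do not mention. First (Theorem~\ref{MainTheoremA}), it shows that $\m S=(0)$ and $\operatorname{rank}_B S=1$ force $S\cong\a$ for a graded ideal $\a\subsetneq B$ with $\operatorname{ht}_B\a\ge 2$ and \emph{no twist}; the untwisted conclusion relies on Narita's inequality $\e_2\ge 0$, which appears nowhere in your outline. With $\mu_B(\a)=2$ and $\operatorname{ht}_B\a\ge 2$ this makes $\a$ a complete intersection, so $\operatorname{depth}_B S=d-1$ and $(1)\Rightarrow(3)$ follows. Second, for $(3)\Rightarrow(2)$ the paper proves $I^3=QI^2$ by induction on $d$: when $d\ge 3$ the hypothesis $\operatorname{depth} G\ge d-2>0$ supplies an element $a_1$ with $a_1t$ regular on $G$, reducing to $A/(a_1)$; the base case $d=2$ is Theorem~\ref{d=2}. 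Contrary to your remark, this $d=2$ case is the hardest part, not the cheapest. The paper either invokes Rossi's reduction-number bound or gives a direct argument: assuming the second generator sits in degree $2$, one writes the syzygy $g\xi+f\eta=0$ explicitly in $S_3$, extracts an element $h\in\tilde{I}\setminus I$, deduces $\ell_A(\tilde{I}/I)=1$ and $\tilde{I}^2=Q\tilde{I}$ from Northcott's inequality, and finally uses the Cohen--Macaulayness of $\mathrm{G}(\tilde{I})$ to show $(a)\cap I^3\subseteq QI^2$, a contradiction. Your phrase ``read it from the Ratliff--Rush filtration'' gestures at this but substantially understates what is required.
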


Combined with Theorem \ref{Sally}, this theorem gives, in the case where $d = 2$, a complete structure theorem of Sally modules of those ideals $I$ with  $\e_1 = \e_0 - \ell_A(A/I) + 1$ (cf. Theorem \ref{d=2}). We could similarly describe the structure of Sally modules in higher dimensional cases also, if one could show that $I^3=QI^2$ if $\e_1 = \e_0 - \ell_A(A/I) + 1$, which we surmise holds true, although we could not prove the implication.

Let us now briefly explain how this paper is organized. We shall prove Theorem \ref{MainTheorem} in Section 3. The key for our proof of Theorem \ref{MainTheorem}  is Theorem  \ref{MainTheoremA}, whose applications we will closely discuss in Section 2.  
Section 2 is devoted also to some auxiliary facts on Sally modules, some of which  are more or less known but we shall indicate brief proofs for the sake of completeness. If $\e_1=2$ but $I^2\ne QI$, the ideal $I$ naturally satisfies the equality $\e_1 = \e_0 - \ell_A(A/I) + 1$. In Section 4 we shall  explore those ideals $I$ with $\e_1 = 2$ but $I^2 \ne QI$, in connection with the Buchsbaum property of the graded rings $R, G, $ and $R'$ associated to $I$. We shall explore in Section 5 one example in order to illustrate our theorems.

In what follows, unless otherwise specified, let  $(A, \m)$ be a Cohen-Macaulay local ring with $d=\dim A >0$. We assume that the field $A/ \m$ is infinite. Let $I$ be an $\m$-primary ideal in $A$ and let $S$ be the Sally module of $I$ with respect to a minimal reduction $Q = (a_1, a_2, \cdots, a_d)$ of $I$. We put $R= A[It], T=A[Qt], R' = A[It, t^{-1}]$, $T' = A[Qt, t^{-1}]$, and $G= R'/t^{-1}R'$. Let $M = \m T + T_+$ be the unique graded maximal ideal in $T$. We denote by ${\H}_M^i(*)$~$(i \in \Z)$ the $i^{\underline{th}}$ local cohomology functor of $T$ with respect to $M$. Let $L$ be a graded $T$-module. For each $n \in \Z$ let $[{\H}_M^i(L)]_n$ stand for the homogeneous component of ${\H}_M^i(L)$ with degree $n$. We denote by $L(\alpha)$, for each $\alpha \in \Z$, the graded $T$-module whose grading is given by $[L(\alpha)]_n = L_{\alpha +n}$ for all $n \in \Z$.


\section{Preliminaries}

The purpose of this section is to summarize some auxiliary results on Sally modules, which we will use throughout this paper. Some of the results are known but let us include brief proofs for the sake of completeness.

We begin with the following.

\begin{lem}\label{lemma1}
The following assertions hold true.
\begin{itemize}
\item[$(1)$] $\m^{\ell} S = (0)$ for  integers $\ell \gg 0$.
\item[$(2)$]The homogeneous components $\{ S_n \}_{n \in \Z}$ of the graded $T$-module $S$ are given by
\[ S_n \cong  \left\{
\begin{array}{rl}
(0) & \quad \mbox{if $n \leq 0 $,} \\
I^{n+1}/IQ^n & \quad \mbox{if $n \geq 1$.}
\end{array}
\right.\]
\item[$(3)$] $S=(0)$ if and only if $I^2=QI$.
\item[$(4)$] Suppose that $S \ne (0)$ and put $V = S/MS$. Let $V_n$~$(n \in \Z)$  denote the homogeneous component of the finite-dimensional graded $T/M$-space $V$ with degree $n$ and put $\Lambda = \{n \in \Z \mid V_n \ne (0) \} $. Let $q = \max \Lambda$. Then we have $\Lambda =\{1, 2, \cdots, q\}$ and $\mathrm{r}_Q(I) = q+1$, where $\mathrm{r}_Q(I)$ stands for  the reduction number of $I$ with respect to $Q$.   
\item[$(5)$] $S = TS_1$ if and only if $I^3 = QI^2$. 
\end{itemize}
\end{lem}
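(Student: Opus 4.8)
The plan is to prove Lemma \ref{lemma1} by exploiting the defining exact sequence of the Sally module and the structure of $T$ as a standard graded algebra over $A$.

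\medskip

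First I would establish the basic exact sequence. Since $S = IR/IT$ by definition, and $R = \bigoplus_{n\geq 0}I^nt^n$ while $IT = \bigoplus_{n\geq 0}IQ^nt^n$ (using $T = A[Qt]$ so that $(IT)_n = IQ^n$ after the degree shift), the homogeneous pieces of $IR$ and $IT$ are $I^{n+1}$ and $IQ^n$ respectively in degree $n$. This immediately gives assertion $(2)$: in degree $n \leq 0$ we have $IQ^n = I^{n+1}$ (for $n=0$ both are $I$, and for $n<0$ both vanish under the grading convention for $IR$ as a submodule of $R$ shifted appropriately), so $S_n = 0$, while for $n \geq 1$ we get $S_n \cong I^{n+1}/IQ^n$. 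I would take care here to pin down the grading convention precisely, since the degree shift between "$IR$ as a subset of $R$" and "the grading on $S$" is exactly where sign/index errors creep in; this bookkeeping is the one genuinely delicate point, though it is routine once set up carefully.

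\medskip

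For assertion $(1)$, I would note that $S$ is a finitely generated graded $T$-module and $\m$ is the maximal ideal, so since $S_n \cong I^{n+1}/IQ^n$ and $I$ is $\m$-primary with $I^{n+1}/IQ^n$ of finite length, the module $S$ is supported (as an $A$-module in each degree) on $\m$; more cleanly, $S$ is annihilated by some power of $M = \m T + T_+$ only in a weak sense, but the right statement is that $S$ is a finitely generated module over $T$ and the Krull dimension forces $\m^\ell$ to kill it. Actually the cleanest argument: $\Ass_T S \subseteq \{M\}$ is false in general, so instead I would use that $S = IR/IT$ where $R$ is module-finite over $T$, giving $\dim_T S \leq d$; combined with the length-finiteness of each $S_n$, a standard argument shows $\m^\ell S = 0$ for $\ell \gg 0$ because $S$ as a $B = T/\m T$-module after killing a power of $\m$ is what governs its structure. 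I would state this via the observation that $\bigoplus_n I^{n+1}/IQ^n$ has each graded piece of finite length and $S$ is finitely generated, so the $A$-annihilators stabilize.

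\medskip

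Assertion $(3)$ is immediate from $(2)$: $S = 0$ iff $S_n = 0$ for all $n$ iff $I^{n+1} = IQ^n$ for all $n \geq 1$, and since $Q$ is a reduction the condition $I^2 = QI$ propagates upward ($I^{n+1} = I^2 Q^{n-1} = QI\cdot Q^{n-1} = IQ^n$), so $S=0 \iff I^2 = QI$. For assertion $(5)$, the condition $S = TS_1$ says $S$ is generated in degree one, which by $(2)$ and the standard-graded structure of $T$ means $S_{n} = T_{n-1}S_1 = Q^{n-1}S_1$ for $n \geq 2$, i.e. $I^{n+1}/IQ^n = Q^{n-1}(I^2/IQ)$; unwinding, this is equivalent to $I^3 = QI^2$ by the same reduction-number propagation. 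Finally, for assertion $(4)$, I would work with $V = S/MS$, whose graded pieces detect the minimal generators; the set $\Lambda$ of degrees where generators appear must be an interval $\{1,\dots,q\}$ because a gap would contradict that $S$ is generated by $S_1$ up through degree $q$ via multiplication by $T_+$, and the identification $\mathrm{r}_Q(I) = q+1$ follows since the reduction number is the least $r$ with $I^{r+1} = QI^r$, which by $(2)$ corresponds to the vanishing of $S_n$ for $n \geq r$. The main obstacle throughout is the consistent tracking of the grading shift in $(2)$; everything else is a clean consequence once that identification is nailed down, using only the definition of $S$, the standard-graded structure of $T$ over $A$, and the fact that $Q$ is a reduction of $I$.
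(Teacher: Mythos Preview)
Your arguments for (2), (3), and (5) are correct and essentially match the paper's. For (1), your presentation wanders through several false starts, but the final idea---$S$ is finitely generated over $T$, each generator lies in some $S_n$ of finite $A$-length and is therefore killed by a power of $\m$, and since $T$ is an $A$-algebra this power of $\m$ kills all of $Ts_i$---is valid and arguably more elementary than the paper's route. The paper instead passes to the extended Rees algebras $R',T'$, observes that $u=t^{-1}$ satisfies $u^\ell\cdot(IR'/IT')=0$ for $\ell\gg0$ because this module is finitely generated and concentrated in positive degrees, and then uses $Q^\ell\subseteq u^\ell T'\cap A$ together with $\m=\sqrt{Q}$.

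There is, however, a genuine error in your sketch of (4). You write that $\mathrm{r}_Q(I)=q+1$ because ``the reduction number is the least $r$ with $I^{r+1}=QI^r$, which by (2) corresponds to the vanishing of $S_n$ for $n\geq r$.'' But by (2), $S_n=0$ means $I^{n+1}=IQ^n$, which is strictly stronger than $I^{n+1}=QI^n$; in general $S_n\neq 0$ persists well beyond the reduction number. The correct bridge is through $V=S/MS$, not $S$ itself: one checks (using $IQ^n\subseteq QI^n$ and graded Nakayama) that $V_n=0$ if and only if $S_n=T_1S_{n-1}$, which unwinds to exactly $I^{n+1}=QI^n$. This same translation is what drives the ``no gap'' claim, which you also leave unjustified: if $V_n=0$ for some $n<q$, then $I^{n+1}=QI^n$, and this propagates ($I^{m+1}=I\cdot I^m=I\cdot QI^{m-1}=QI^m$ for all $m\geq n$) to give $I^{q+1}=QI^q$, hence $S_q\subseteq T_+S$ and $V_q=0$, a contradiction. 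Your phrase ``a gap would contradict that $S$ is generated by $S_1$ up through degree $q$ via multiplication by $T_+$'' does not supply this mechanism; without the identification $V_n=0\Leftrightarrow I^{n+1}=QI^n$ and the propagation step, neither the interval statement nor the reduction-number identification is established.
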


\begin{proof}
Let $u = t^{-1}$ and notice that $S = IR/IT \cong IR'/IT'$ as graded $T$-modules. We then have $u^{\ell}{\cdot}(IR'/IT') = (0)$ for some $\ell \gg 0$, because the graded $T'$-module $IR'/IT'$ is finitely generated and $[IR'/IT']_n =(0)$ for all $n \leq 0$.  Hence $\m^{\ell}{\cdot}S = (0)$ for $\ell \gg 0$, because $Q^{\ell} = (Qt^{\ell})u^{\ell} \subseteq u^{\ell}T' \cap A$ and $\m = \sqrt{Q}$. This proves assertion (1).

Since $[IR]_n = (I^{n+1})t^n$ and $[IT]_n = (IQ^n)t^n$ for all $n \geq 0$, assertion (2) follows from the definition of the Sally module $S = IR/IT$. Assertion (3) readily follows from assertion (2).

To show assertion (4), notice that $V_1 \cong S_1/\m S_1 \ne (0)$, since $S =\sum_{n \geq 1}S_n$ and $S_1 \cong I^2/QI \ne (0)$. Hence $1 \in \Lambda$. Let $i \in \Lambda$ and put $\alpha_i = \dim_kV_i$, where $ k =T/M$. We choose elements $\{\xi_{i, j}\}_{1 \leq j \leq \alpha_i}$ of $S_i$ so that the images of $\{\xi_{i, j}\}_{1 \leq j \leq \alpha_i}$ in $V$ form a $k$-basis of $V_i$. Hence, thanks to graded Nakayama's lemma, we have 
$$S =\sum_{i \in \Lambda}(\sum_{j=1}^{\alpha_i}T\xi_{i,j}).$$ Let $\xi_{i,j}$ be the image of $x_{i,j}t^{i}$ in $S$ with $x_{i,j} \in I^{i+1}$.

Let $n \geq 1$ be an integer  and assume that $n \not\in \Lambda$.  Choose $x \in I^{n+1}$ and let $\xi$ be the image of $xt^n$ in $S$. We write 
$$\xi =\sum_{i \in \Lambda , i < n}(\sum_{j=1}^{\alpha_i}\varphi_{i,j}\xi_{i,j})$$ with $\varphi_{i,j} \in T_{n-i}$.  Then, letting $\varphi_{i,j} = b_{i,j}t^{n-i}$ with $b_{i,j} \in Q^{n-i}$, we get $$x \equiv \sum_{i \in \Lambda , i < n}(\sum_{j=1}^{\alpha_i}b_{i,j}x_{i,j})~~{\mod}~~Q^nI, $$ whence $x \in QI^n$, because $\sum_{j=1}^{\alpha_i}b_{i,j}x_{i,j} \in Q^{n-i}I^{i+1} \subseteq QI^{n}$ for all $i \in \Lambda$ such that $i < n$. Thus $I^{n+1} = QI^n$. Suppose now $n \leq q$. Then $I^{q+1}= QI^q$, whence $S_q \subseteq T_+S$ and so $V_q = (0)$, which is impossible. Hence $\Lambda = \{1, 2, \cdots, q\}$. Choosing $n = q+1$, the above observation shows that $I^{q+2} = QI^{q+1}$, whence $\mathrm{r}_Q(I) \leq q+1$. If $r = \mathrm{r}_Q(I) < q+1$, we have $I^{q+1} =QI^q$, whence $S_q \subseteq T_+S$, which is absurd. Thus $\mathrm{r}_Q(I) = q+1$. This proves assertion (4). Assertion (5) is now clear. 
\end{proof}

\begin{prop}\label{proposition2}
Let $\fkp = \m T$. Then the following assertions hold true.
\begin{itemize}
\item[$(1)$] ${\Ass}_TS \subseteq \{ \fkp \}$. Hence $\dim_TS=d$, if $S\ne (0)$. 
\item[$(2)$] $\ell_A(A/I^{n+1})={\e}_0\binom{n+d}{d}-({\e}_0-\ell_A(A/I)){\cdot}\binom{n+d-1}{d-1}-\ell_A(S_n)$ for all $n \geq 0$.
\item[$(3)$] We have $\e_1 = \e_0 - \ell_A(A/I) + \ell_{T_\fkp}(S_\fkp)$. Hence
$\e_1 = \e_0 - \ell_A(A/I) + 1$ if and only if $\m S = (0)$ and $\operatorname{rank}_BS = 1$. 
\item[$(4)$] Suppose that $S\ne (0)$. Let $s = \operatorname{depth}_TS$. Then  $\operatorname{depth} G = s-1$ if $s < d$. $S$ is a Cohen-Macaulay $T$-module if and only if  $\operatorname{depth} G \geq d-1$.
\end{itemize}
\end{prop}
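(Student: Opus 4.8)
The plan is to handle the four assertions in the order (2), (1), (3), (4): the length computation (2) is self‑contained and feeds the rest, (1) underlies both (3) and (4), and (4) is most naturally phrased in local cohomology.

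For (2) I would argue purely with lengths. Since $Q$ is a parameter ideal in the Cohen–Macaulay ring $A$, one has $\ell_A(A/Q^{n+1})=\e_0\binom{n+d}{d}$ for all $n\ge 0$, and $\operatorname{gr}_Q(A)\cong (A/Q)[X_1,\dots,X_d]$ is $A/Q$-free, so $Q^n/Q^{n+1}$ is free of rank $\binom{n+d-1}{d-1}$. Reading off the chain $Q^{n+1}\subseteq IQ^n\subseteq I^{n+1}$ and using Lemma \ref{lemma1}(2), I get $\ell_A(I^{n+1}/Q^{n+1})=\ell_A(S_n)+\ell_A(IQ^n/Q^{n+1})$, where the last term equals $\ell_A(I/Q)\binom{n+d-1}{d-1}=(\e_0-\ell_A(A/I))\binom{n+d-1}{d-1}$ because $IQ^n/Q^{n+1}\cong (I/Q)\otimes_{A/Q}[\operatorname{gr}_Q(A)]_n$. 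Substituting into $\ell_A(A/I^{n+1})=\ell_A(A/Q^{n+1})-\ell_A(I^{n+1}/Q^{n+1})$ gives (2).

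For (1) the easy half is the support: by Lemma \ref{lemma1}(1) $\m^\ell S=0$, so $\sqrt{\Ann_T S}\supseteq\fkp$ and $\Supp_T S\subseteq V(\fkp)$; since $T/\fkp=B$ is a domain of dimension $d$ and (when $S\ne (0)$) $\fkp\in\Supp_T S$, this already yields $\dim_T S=d$ and identifies $\fkp$ as the unique minimal prime of $\Supp_T S$. The real content is the absence of embedded primes, i.e.\ that $S$ satisfies Serre's $(S_1)$. Here I would use the inclusion $S=IR/IT\hookrightarrow R/IT$ together with $0\to T/IT\to R/IT\to R/T\to 0$ and the computation $T/IT\cong (A/I)[X_1,\dots,X_d]$, which is a maximal Cohen–Macaulay $T$-module with $\Ass_T(T/IT)=\{\fkp\}$; the associated primes of $R/T$ are in turn tied to those of $S$ through the surjection $(R/T)(1)\twoheadrightarrow S$ with kernel $\cong(I/Q)\otimes_{A/Q}\operatorname{gr}_Q(A)$. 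Controlling this unmixedness is the step I expect to be the main obstacle, and I would resolve it by the same localization technique used below in (4), showing that any embedded contribution is annihilated by a power of $t^{-1}$ and hence vanishes.

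For (3) I would extract $\e_1$ by matching the coefficient of $\binom{n+d-1}{d-1}$ in (2) against the Hilbert polynomial: since $\Ass_T S\subseteq\{\fkp\}$ by (1), a prime filtration of $S$ shows the normalized leading term of $\ell_A(S_n)$ equals $\ell_{T_\fkp}(S_\fkp)$, whence $\e_1=\e_0-\ell_A(A/I)+\ell_{T_\fkp}(S_\fkp)$. For the equivalence I would note that $\fkp T_\fkp$ is principal: each $a_i t$ lies outside $\fkp$, hence is a unit in $T_\fkp$, forcing $\fkp T_\fkp=(a_1)T_\fkp$; as $a_1$ is a nonzerodivisor on $T\subseteq A[t]$, the ring $T_\fkp$ is a discrete valuation ring. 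Thus $\ell_{T_\fkp}(S_\fkp)=1$ exactly when $S_\fkp\cong\kappa(\fkp)$, which is killed by $\fkp T_\fkp\supseteq\m T_\fkp$, so $(\m S)_\fkp=(0)$; because $\Ass_T(\m S)\subseteq\Ass_T S\subseteq\{\fkp\}$, this forces $\m S=(0)$ and $\operatorname{rank}_B S=\ell_{T_\fkp}(S_\fkp)=1$. The reverse implication is immediate once $\m S=(0)$, since then $\operatorname{rank}_B S=\ell_{T_\fkp}(S_\fkp)$.

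For (4) I would pass to $\H^\bullet_M$. First, $0\to IT\to T\to (A/I)[X_1,\dots,X_d]\to 0$ shows $IT$ is maximal Cohen–Macaulay ($\depth_M IT=d+1$), so $0\to IT\to IR\to S\to 0$ gives $\H^i_M(IR)\cong\H^i_M(S)$ for $i\le d-1$; combined with the isomorphism $IR\cong R(1)$ (multiplication by $t^{-1}$) this identifies $\depth_M R$ with $\depth_T S$ whenever the latter is $<d$. Feeding this into $0\to R(1)\to R\to G\to 0$, whose connecting map on $\H^\bullet_M$ is multiplication by $u=t^{-1}$, the crux is that $\H^i_M(R)$ is $u$-power torsion for $i<d$: the direct limit of $R\xrightarrow{u}R\xrightarrow{u}\cdots$ is $A[t]$, with $\H^i_M(A[t])\cong\H^i_\m(A)\otimes_A A[t]=0$ for $i<d$ because $A$ is Cohen–Macaulay. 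Hence $u$ has nonzero kernel on the nonzero module $\H^{s}_M(R)$ (with $s=\depth_T S<d$), and the long exact sequence yields $\H^{s-1}_M(G)\ne 0$ while $\H^i_M(G)=0$ for $i<s-1$, i.e.\ $\depth G=s-1$. The Cohen–Macaulay characterization follows formally: if $S$ is maximal Cohen–Macaulay the same sequences force $\H^i_M(G)=0$ for $i\le d-2$, while if $\depth G\ge d-1$ the established equality $\depth G=\depth_T S-1$ forbids $\depth_T S<d$. The principal difficulty here is obtaining the \emph{exact} depth relation rather than a one-sided inequality, which is exactly what the $u$-torsion computation supplies.
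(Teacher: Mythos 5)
Your parts (2) and the coefficient extraction in (3) are fine and essentially match the paper. The genuine gap is in (1), and it is load-bearing because (3) depends on it. Everything easy in (1) — that $\fkp\subseteq P$ for every $P\in\Ass_TS$ and that $\dim_TS=d$ — you do correctly, but the actual content of the assertion is that $S$ has \emph{no} associated prime strictly containing $\fkp$, and this you do not prove: you flag it as "the main obstacle" and defer to "the same localization technique used below in (4), showing that any embedded contribution is annihilated by a power of $t^{-1}$," which is not an argument ($t^{-1}\notin T$, and it is unclear what module this torsion statement is about). Moreover, the route you sketch — embedding $S$ into $R/IT$ and relating $\Ass_T(R/T)$ to $\Ass_TS$ "through the surjection $(R/T)(1)\twoheadrightarrow S$" — goes the wrong way: a surjection onto $S$ gives no control of the associated primes of its source, and conversely $\Ass$ of a quotient is not contained in $\Ass$ of the module. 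The paper's argument is short and you should compare it with your setup: take $P\in\Ass_TS$, localize $0\to IT\to IR\to S\to 0$ at $P$, note that $IT$ is a maximal Cohen--Macaulay $T$-module (since $T/IT\cong(A/I)\otimes_{A/Q}(T/QT)$ is a polynomial ring over $A/I$) and that $a_1\in P$ is a nonzerodivisor on $IR$, so $\depth_{T_P}IR_P>0$ while $\depth_{T_P}S_P=0$; the depth lemma then forces $\depth_{T_P}IT_P=1$, hence $\dim T_P=1$ and $P=\fkp$. This is exactly the kind of localization-plus-depth-lemma step you were reaching for, but it has to be carried out on the sequence $0\to IT\to IR\to S\to 0$, not on $R/IT$.

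Two secondary points. In (3) your parenthetical claim that $T_\fkp$ is a discrete valuation ring is false in general: $a_1T_\fkp=QT_\fkp$ is only a reduction of $\fkp T_\fkp$, and $\ell_{T_\fkp}(T_\fkp/QT_\fkp)=\e_0$, so $T_\fkp$ is a one-dimensional Cohen--Macaulay local ring of multiplicity $\e_0$, a DVR only when $I=\m$ and $A$ is regular; fortunately your actual deduction ($\ell_{T_\fkp}(S_\fkp)=1\Rightarrow S_\fkp\cong\kappa(\fkp)\Rightarrow(\m S)_\fkp=0\Rightarrow\m S=0$ via $\Ass_T(\m S)\subseteq\{\fkp\}$) never uses the DVR claim and agrees with the paper. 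In (4) the identification $IR\cong R(1)$ is wrong — the degree $-1$ component of $R(1)$ is $A\ne 0$ while $[IR]_{-1}=0$; the correct statement is $IR\cong R_+(1)$, and the degree-$0$ discrepancy is exactly why the paper inserts the sequence $0\to R_+\to R\to A\to 0$. Likewise "the direct limit of $R\xrightarrow{u}R\to\cdots$ is $A[t]$" is ill-posed since multiplication by $u$ does not map $R$ into $R$. Your $u$-torsion strategy is the idea behind the Huckaba--Marley comparison the paper simply cites ({[}HM1, Theorem 2.1{]}), and it can be made rigorous, but as written it would not compile into a proof; either cite that result as the paper does or set up the degree-shifted maps $R_{\geq k}(k)\cong I^kR\hookrightarrow R$ carefully before taking limits.
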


\begin{proof}
(1) Let $P \in {\Ass}_TS$. Then $\fkp = {\m} T \subseteq P$, since $\m^\ell S=0$ for some $\ell \gg 0$ (Lemma \ref{lemma1} (1)). Since $\operatorname{ht}_T\fkp=1$, it is enough to show that $\operatorname{ht}_TP \leq 1$. We look at the exact sequence 
$$0 \to IT_{P} \to IR_{P} \to S_{P} \to 0$$ of $T_P$-modules 
and recall that $IT$ is a Cohen-Macaulay $T$-module with $\dim_TIT = d+1$, because $$T/IT = (A/I)\otimes_{A/Q}(T/QT)$$ is the polynomial ring with $d$ indeterminates over $A/I$ and $T$ is a Cohen-Macaulay ring with $\dim T = d+1$. Notice now that $a_1 \in P$ is a nonzerodivisor on $IR$, whence $\operatorname{depth}_{T_P}IR_P > 0$. Thanks to depth lemma, it follows from the above exact sequence that $\dim_{T_P}{IT_P} = 1$, since $\operatorname{depth}_{T_P}IR_P >0$ and $\operatorname{depth}_{T_P}S_P = 0$. Hence $\dim T_P = 1$, because $IT$ is a Cohen-Macaulay $T$-module with $(0) :_TIT =(0)$.
Thus $P = \fkp$ so that we have  ${\Ass}_TS = \{\fkp \}$ as is claimed.

(2) 
Let $n \geq 0$ be an integer. Then, thanks to the exact sequence
$$0 \to S_n \to A/Q^nI \to A/I^{n+1} \to 0$$
of $A$-modules (Lemma \ref{lemma1} (2)), we have
$$\ell_A(A/I^{n+1})=\ell_A(A/Q^nI)-\ell_A(S_n),$$
while by  the exact sequence
$$0 \to Q^n/Q^nI \to A/Q^nI \to A/Q^n \to 0$$ we get
\begin{eqnarray*}
\ell_A(A/Q^nI)&=&\ell_A(A/Q^n)+\ell_A(Q^n/Q^nI)\\
&=&\ell_A(A/Q) {\cdot}\binom{n+d-1}{d}+\ell_A(Q^n/Q^nI)\\
              &=&\e_0 \binom{n+ d -1}{d}+\ell_A(Q^n/Q^nI)\\
              &=&\e_0 \binom{n+d}{d} - \e_0 \binom{n+d-1}{d-1} + \ell_A(Q^n/Q^nI),
\end{eqnarray*}
because $\e_0 = \ell_A(A/Q)$ (recall that $Q=(a_1, a_2, \cdots, a_d)$ is a minimal reduction of $I$).
Thanks to the isomorphisms
$$Q^n/Q^nI \cong (A/I) \otimes_A (Q^n/Q^{n+1}) \cong (A/I) \otimes_A[(A/Q)^{\binom{n+d-1}{d-1}} ]\cong (A/I)^{\binom{n+d-1}{d-1}},$$
we furthermore have the equality
$$\ell_A(Q^n/Q^nI)=\ell_A(A/I){\cdot}\binom{n+d-1}{d-1}.$$
Thus 
\begin{eqnarray*}
\ell_A(A/I^{n+1})&=&\ell_A(A/Q^nI)-\ell_A(S_n)\\
&=&[\e_0 \binom{n+d}{d} - \e_0 \binom{n+d-1}{d-1} + \ell_A(Q^n/Q^nI)]-\ell_A(S_n)\\              &=&[\e_0 \binom{n+d}{d} - \e_0 \binom{n+d-1}{d-1}+\ell_A(A/I){\cdot}\binom{n+d-1}{d-1}]-\ell_A(S_n)\\
              &=&\e_0 \binom{n+d}{d} - (\e_0 - \ell_A(A/I)){\cdot}\binom{n+d-1}{d-1}  -\ell_A(S_n)
\end{eqnarray*}
for all $n \geq 0$.

(3) If $S=(0)$, then $\e_1 = e_0 - \ell_A(A/I)$ by assertion (2). So, we may assume that $S \ne (0)$. We take a  filtration
$$ S= L_0 \supsetneq L_1 \supsetneq \cdots \supsetneq L_q=(0)$$
of the graded $T$-module $S$ such that each $L_i$ is a graded $T$-submodule of $S$ and 
$$L_i/L_{i+1} \cong (T/P_i)(-\alpha_i)$$
with some integer $\alpha_i$ for all $0 \leq i < q$,
where $P_i$ is a graded prime ideal of $T$.
Then, because ${\Ass}_TS =\operatorname{Min}_TS= \{\fkp \}$, we see that $\fkp \subseteq P_i$ for all $0 \leq i < q$. We furthermore have  $$\ell_{T_{\fkp}}(S_{\fkp}) =\sharp\{\mbox{ $i$ $\mid$ $0 \leq i < q$, $\fkp =P_i$} \},$$
since
$$\ell_{T_\fkp}(S_\fkp)=\sum_{i=0}^{q-1}\ell_{T_\fkp}((L_{i}/L_{i+1})_\fkp)=\sum_{i=0}^{q-1}\ell_{T_\fkp}(T_\fkp/P_i T_\fkp)$$
and
$$ T_\fkp/P_iT_\fkp =  \left\{
\begin{array}{ll}
B_\fkp & \quad \mbox{if $\fkp=P_i$}\\
(0) & \quad \mbox{if $ \fkp \subsetneq P_i$}.
\end{array}
\right.$$
On the other hand, we have
$$ \ell_{A}(S_n)=\sum_{i=0}^{q-1}\ell_A([L_i/L_{i+1}]_n)=\sum_{i=0}^{q-1}\ell_A([(T/P_i)(-\alpha_i)]_{n})$$
for all $n \in \Z$.
When $\fkp = P_i$, we get 
\begin{eqnarray*}
\ell_A([(T/P_i)(-\alpha_i)]_{n}) &=& \ell_A(B_{n - \alpha_i})=\binom{n - \alpha_i+ d-1}{d-1}\\
                             &=& \binom{n + d-1}{d-1} - \alpha_i \binom{n + d-2}{d-2}+ \mbox{(lower terms)}
\end{eqnarray*}
and when  $\fkp \subsetneq P_i$, we have ${\dim}\,T/P_i < d$,
so that  the degree of the Hilbert polynomial of $T/P_i$ is less than $d-1$.
Consequently, the normalized coefficient in degree $d-1$ of the Hilbert polynomial of the graded $T$-module $S$ is exactly equal to $\ell_{T_{\fkp}}(S_{\fkp})$ so that, thanks to assertion (2), we get the equality $\e_1 = \e_0 - \ell_A(A/I) + \ell_{T_{\fkp}}(S_{\fkp})$.

To see the second assertion, recall that ${\Ass}_TS = \{\fkp \}$. If $\ell_{T_{\fkp}}(S_{\fkp})=1$, then $\fkp S_{\fkp}=(0)$, so that $\fkp S =(0)$; hence $\m S = (0)$ and $\operatorname{rank}_BS =\ell_{T_{\fkp}}(S_{\fkp})= 1$. The reverse implication is clear.

(4) Recall that $s \leq d = \dim_TS$.  Because $IT$ is a Cohen-Macaulay $T$-module with $\operatorname{dim}_TIT = d + 1$, by the exact sequence $$0 \to IT \to IR \to S \to 0 \leqno{(a)}$$ we have $\operatorname{depth}_TIR \geq d$ if $s = d$ and $\operatorname{depth}_TIR = s$ if $ s < d$,  thanks to depth lemma. We put  $L = R_+$ and notice that $IR \cong L(1)$ as graded $R$-modules. Therefore, since $A$ is a Cohen-Macaulay ring with $\dim A = d$, by the exact sequence $$0 \to L \to R \to A \to 0 \leqno{(b)}$$ we have $\operatorname{depth} R \geq d$ if $s =d$ and $\operatorname{depth} R = s$ if $s < d$. Hence, thanks to the exact sequence
$$0 \to IR \to R \to G \to 0, \leqno{(c)}$$ we get $\operatorname {depth} G \geq d-1$ if $s = d$. If $s < d$, then $\operatorname{depth} R = s$, so that by \cite[Theorem 2.1]{HM1} we get $\operatorname{depth} G = s-1$.

Suppose that  $\operatorname {depth} G \geq d-1$. Then $\operatorname {depth} R \geq d$ by \cite[Theorem 2.1]{HM1}, whence by the exact sequence $(b)$ we have $\operatorname{depth}_TL \geq d$, so that $\operatorname{depth}_TS \geq d$ by the exact sequence $(a)$. Hence $S$ is a Cohen-Macaulay $T$-module. 
\end{proof}

Combining Lemma \ref{lemma1} (3) and Proposition \ref{proposition2}, we get the following result of D. G. Northcott and C. Huneke.

\begin{cor}[\cite{H, No}]\label{huneke}
We have $\e_1 \geq \e_0 - \ell_A(A/I)$. The equality $\e_1 = \e_0 - \ell_A(A/I)$ holds true if and only if $I^2 = QI$.
When this is the case, $\e_i = 0$ for all $2 \leq i \leq d$, provided $d \geq 2$.
\end{cor}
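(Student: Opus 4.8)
The plan is to deduce the corollary formally from Proposition \ref{proposition2} and Lemma \ref{lemma1}, since the statement is essentially a repackaging of those results. First I would invoke the formula
$$\e_1 = \e_0 - \ell_A(A/I) + \ell_{T_\fkp}(S_\fkp)$$
from Proposition \ref{proposition2}(3). Because $\ell_{T_\fkp}(S_\fkp) \geq 0$ always holds, the inequality $\e_1 \geq \e_0 - \ell_A(A/I)$ is immediate, and equality occurs exactly when $\ell_{T_\fkp}(S_\fkp) = 0$, that is, when $S_\fkp = (0)$.

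Next I would translate the condition $S_\fkp = (0)$ into a statement about $S$ itself. By Proposition \ref{proposition2}(1) we have $\Ass_T S \subseteq \{\fkp\}$, so if $S \neq (0)$ then $\fkp \in \Ass_T S$ and hence $S_\fkp \neq (0)$. Therefore $S_\fkp = (0)$ forces $S = (0)$, the converse being trivial. Finally Lemma \ref{lemma1}(3) supplies the equivalence $S = (0) \Longleftrightarrow I^2 = QI$. Chaining these gives the desired statement: $\e_1 = \e_0 - \ell_A(A/I)$ if and only if $I^2 = QI$.

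For the vanishing of the higher coefficients I would assume $I^2 = QI$, so that $S = (0)$ and in particular $\ell_A(S_n) = 0$ for every $n$. Substituting this into the length formula of Proposition \ref{proposition2}(2) yields
$$\ell_A(A/I^{n+1}) = \e_0 \binom{n+d}{d} - (\e_0 - \ell_A(A/I)) \binom{n+d-1}{d-1}$$
for all $n \geq 0$, so the right-hand side is a polynomial agreeing with $\ell_A(A/I^{n+1})$ at infinitely many values of $n$ and hence is the Hilbert polynomial $H_I(n)$. Comparing it with the standard expansion
$$H_I(n) = \e_0 \binom{n+d}{d} - \e_1 \binom{n+d-1}{d-1} + \cdots + (-1)^d \e_d$$
and using that the shifted binomials $\{\binom{n+d-i}{d-i}\}_{0 \leq i \leq d}$ are linearly independent as polynomials in $n$, I would match coefficients to read off $\e_1 = \e_0 - \ell_A(A/I)$ (reconfirming the equality case) together with $\e_i = 0$ for all $2 \leq i \leq d$.

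There is no genuine obstacle here, as the corollary is a formal consequence of the two preceding results; the only point demanding mild care is the coefficient comparison in the last step, where one must recall that the polynomial identity need only hold for $n \gg 0$ and that the family of shifted binomials forms a basis for the space of numerical polynomials of degree at most $d$.
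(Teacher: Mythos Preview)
Your argument is correct and is exactly the approach indicated in the paper, which simply states that the corollary follows by combining Lemma~\ref{lemma1}(3) with Proposition~\ref{proposition2}. You have filled in the details faithfully: the formula in Proposition~\ref{proposition2}(3) gives the inequality and the equality criterion via $\ell_{T_\fkp}(S_\fkp)=0$, Proposition~\ref{proposition2}(1) together with Lemma~\ref{lemma1}(3) converts this into $I^2=QI$, and Proposition~\ref{proposition2}(2) with $S=(0)$ yields the vanishing of $\e_2,\ldots,\e_d$.
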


The following result is the heart of this paper.

\begin{thm}\label{MainTheoremA}
The following conditions are equivalent.
\begin{itemize}
\item[$(1)$] $\fkm S=(0)$ and ${\rank}_BS=1$.
\item[$(2)$] Either $S \cong B(-1)$ as graded $T$-modules, or $S \cong \a$ as graded $T$-modules for 
some graded ideal $\a~(\ne B)$ of $B$ with ${\height}_B \a \geq 2$.
\end{itemize}
\end{thm}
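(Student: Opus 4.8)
The plan is to treat the two implications separately, with essentially all of the work going into $(1)\Rightarrow(2)$.

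For $(2)\Rightarrow(1)$ I would simply observe that in either alternative $S$ becomes a module over $B=T/\m T$, so that $\m S=(0)$ holds automatically, and that both $B(-1)$ and a nonzero proper graded ideal $\a$ of the domain $B$ have $\rank_B=1$; here $\height_B\a\ge2$ forces $\a\ne(0)$, so the rank is indeed one. Thus $(1)$ follows with no extra effort.

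For $(1)\Rightarrow(2)$ the starting point is that $\m S=(0)$ makes $S$ a finitely generated graded module over the polynomial ring $B$, while Proposition \ref{proposition2}(1) gives $\Ass_TS=\{\fkp\}$, hence $\Ass_BS=\{(0)\}$; since $B$ is a normal domain (indeed a UFD) this says that $S$ is torsion-free of rank one. By Lemma \ref{lemma1}(2) we have $S_n=(0)$ for $n\le0$ and $S_1\cong I^2/QI\ne(0)$. First I would fix $0\ne\eta\in S_1$ and use torsion-freeness to produce a graded injection $B(-1)\hookrightarrow S$, $b\mapsto b\eta$; tensoring with the graded field of fractions of $B$ then realizes $S$ as a graded fractional ideal lying between $B(-1)$ and a rank-one free module over that field. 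Next I would pass to the reflexive hull $S^{**}=\Hom_B(\Hom_B(S,B),B)$: over the normal domain $B$ the natural map $S\hookrightarrow S^{**}$ has cokernel of codimension $\ge2$, and $S^{**}$ is reflexive of rank one, so, the divisor class group of the UFD $B$ being trivial, $S^{**}\cong B(a)$ for a single integer $a$. The composite $B(-1)\hookrightarrow S\hookrightarrow S^{**}=B(a)$ is multiplication by a form of degree $a+1$, which forces $a\ge-1$; identifying the image of $S$ inside $B(a)$ with a twisted homogeneous ideal then yields $S\cong\a(a)$ for a homogeneous ideal $\a\subseteq B$ of initial degree $a+1$, which moreover satisfies $\a^{**}\cong B$, i.e. $\a=B$ or $\height_B\a\ge2$.

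Granting that $a\in\{-1,0\}$, the dichotomy falls out: if $a=-1$ then $B(-1)\subseteq S\subseteq S^{**}=B(-1)$ forces $S=B(-1)$, the first alternative; if $a=0$ then $S\cong\a$ with no shift, where $\a\ne B$ because $\a_0=(0)$, and $\height_B\a\ge2$, the second alternative. (When $d=1$, $B$ is a principal ideal domain, $S$ is already free, and only $a=-1$ can occur, consistently with the absence of ideals of height $\ge2$.) The hard part is exactly the upper bound $a\le0$, that is, excluding $a\ge1$, since for $a\ge1$ one would get $S\cong\a(a)$ with a genuine shift, not of the permitted form. I would extract this bound from Hilbert coefficients rather than from module theory alone: Proposition \ref{proposition2}(2) together with $\rank_BS=1$ gives $\ell_A(S_n)=\binom{n+d-1}{d-1}-\e_2\binom{n+d-2}{d-2}+\cdots$ for $n\gg0$, while comparing $S$ with $S^{**}=B(a)$, whose cokernel has dimension $\le d-2$ and therefore does not disturb the top two coefficients, gives $\e_2=-a$. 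Hence $a\ge1$ is equivalent to $\e_2\le-1$, which is ruled out by Theorem \ref{Sally}: under the standing hypothesis $\rank_BS=1$ (equivalently $\e_1=\e_0-\ell_A(A/I)+1$) that theorem allows $\e_2\ne0$ only together with $S\cong B(-1)$, so whenever $S\not\cong B(-1)$ one must have $\e_2=0$ and thus $a=0$. This is the step I expect to carry the genuine content; the lower bound $a\ge-1$ and the codimension-$\ge2$ statement for $S^{**}/S$ are routine.
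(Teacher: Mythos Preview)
Your proof is correct and structurally matches the paper's: both realize $S$ as a twisted graded ideal of $B$ (the paper directly, using that a torsion-free rank-one module over the UFD $B$ is a shifted ideal from which one may strip height-one factors; you via the reflexive hull $S^{**}\cong B(a)$, which is an equivalent repackaging) and then compute $\e_2=-a$ from the Hilbert function and the codimension-$2$ cokernel. The one substantive difference is the positivity input used to bound the shift. The paper invokes Narita's theorem \cite{Na} ($\e_2\ge0$ for $\m$-primary ideals in a Cohen--Macaulay ring), giving $a\le0$ in one line, whereas you appeal to Theorem~\ref{Sally} in contrapositive form to force $\e_2=0$ when $S\not\cong B(-1)$. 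Your deduction is valid if Theorem~\ref{Sally} is taken as the established Sally--Vasconcelos result from \cite{S3,V}, but be aware that in this paper it is re-proved \emph{after} Theorem~\ref{MainTheoremA}, and for $d\ge3$ that re-proof passes through Corollary~\ref{Theorem2}, which itself rests on Theorem~\ref{MainTheoremA}; so inside the paper's own logical order your citation is circular. Since the proof of Theorem~\ref{Sally} ultimately leans on Narita's inequality anyway, replacing your appeal to it by ``$\e_2=-a\ge0$ by \cite{Na}, hence $a\le0$'' is both shorter and self-contained.
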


\begin{proof}
We have only to show $(1) \Rightarrow (2)$.
Because $S_1 \ne (0)$ and $S =\sum_{n \geq 1}S_n$ by Lemma \ref{lemma1}, we have $S \cong B(-1)$ as graded $B$-modules once $S$ is $B$-free.

Suppose that $S$ is not $B$-free. The $B$-module $S$ is torsionfree, since ${\Ass}_TS=\{\m T\}$ by Proposition  \ref{proposition2} (1).  Therefore, since $\operatorname{rank}_BS = 1$, we see $d \geq 2$ and $S \cong \a(m)$ as graded $B$-modules for some integer $m$ and  some graded ideal $\a~(\ne B)$ in $B$, so that we get the exact sequence 
$$0 \to S(-m) \to B \to B/\a \to 0$$
of graded $B$-modules.  We may assume that $\operatorname{ht}_B \a \geq 2$, since $B = k[X_1, X_2, \cdots, X_d]$ is the polynomial ring over the field $k = A/\m$. We then have $m \geq 0$, since $\a_{m+1} =[\a (m)]_1 \cong S_1 \ne (0)$ and $\a_0 = (0)$. We want to show $m = 0$.

Because $\dim B/\a \leq d-2$, the Hilbert polynomial of $B/\a$ has degree at most $d-3$. Hence 
\begin{eqnarray*}
\ell_A(S_n)&=&\ell_A(B_{m+n})-\ell_A([B/\a]_{m+n})\\
            &=&\binom{m+n+d-1}{d-1}-\ell_A([B/\a]_{m+n})\\
 &=&\binom{n+d-1}{d-1} +m\binom{n+d-2}{d-2} +\mbox{(lower terms)}
\end{eqnarray*}
for $n \gg 0$. Consequently

\begin{eqnarray*}
\ell_A(A/I^{n+1})&=&\e_0\binom{n+d}{d} -(\e_0 - \ell_A(A/I)){\cdot}\binom{n+d-1}{d-1}-\ell_A(S_n)\\
            &=&\e_0\binom{n+d}{d} -(\e_0 - \ell_A(A/I)+1){\cdot}\binom{n+d-1}{d-1}-m\binom{n+d-2}{d-2}\\
& & + \mbox{(lower terms)}
\end{eqnarray*}
by Proposition \ref{proposition2} (2), so that we get $\e_2 = -m$. Thus $m = 0$, because $\e_2 \geq 0$ by Narita's theorem (\cite{Na}). 
\end{proof}

We note some consequences of Theorem \ref{MainTheoremA}.

\begin{cor}\label{Theorem2}
Suppose $\e_1 = \e_0 - \ell_A(A/I) + 1$ and $I^3 =QI^2$. Let $c = \ell_A(I^2/QI)$. Then the following assertions hold true.
\begin{enumerate}
\item[$(1)$] $0 < c \leq d$ and $\mu_B(S) = c$.
\item[$(2)$] $\operatorname{depth} G \geq d-c$ and $\operatorname{depth}_BS = d-c + 1$. 
\item[$(3)$] $\operatorname{depth} G =d-c$, if $c \geq 2$.
\item[$(4)$] Suppose $c < d$. Then 
$\ell_A(A/I^{n+1})=\e_0\binom{n+d}{d} -\e_1\binom{n+d-1}{d-1}+\binom{n+d-c-1}{d-c-1}$ for all $n \geq 0$ and 
\[\e_i =\left\{
\begin{array}{
rl}
0 \ \ \ \ \ & \quad \mbox{if $i \ne c+1$} \\
(-1)^{c+1} & \quad \mbox{if $i = c+1$}
\end{array}\right.\]
for $2 \leq i \leq d$.
\item[$(5)$] Suppose $c=d$. Then 
$\ell_A(A/I^{n+1})=\e_0\binom{n+d}{d} -\e_1\binom{n+d-1}{d-1}$ for all $n \geq 1$. We have
$\e_i = 0$ for all $2 \leq i \leq d$, if $d \geq 2$. 
\end{enumerate}
\end{cor}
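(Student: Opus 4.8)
The plan is to combine the structure theorem (Theorem~\ref{MainTheoremA}) with the hypothesis $I^3=QI^2$ to pin $S$ down as a very explicit $B$-module, and then to read off every assertion from its Koszul resolution and its Hilbert series.

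First note $c>0$: if $c=0$ then $I^2=QI$, forcing $\e_1=\e_0-\ell_A(A/I)$ by Corollary~\ref{huneke}, against the hypothesis. By Proposition~\ref{proposition2}(3) the equality $\e_1=\e_0-\ell_A(A/I)+1$ gives $\m S=(0)$ and $\operatorname{rank}_BS=1$, so Theorem~\ref{MainTheoremA} applies: either $S\cong B(-1)$, or $S\cong\a$ for a graded ideal $\a\ (\ne B)$ of $B$ with $\height_B\a\ge 2$. On the other hand $I^3=QI^2$ yields $S=TS_1$ by Lemma~\ref{lemma1}(5), and since $\m S=(0)$ this reads $S=BS_1$; thus $S$ is generated in degree $1$ over $B$. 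In the free case $S\cong B(-1)$ we get $c=\ell_A(S_1)=\dim_kB_0=1$. In the non-free case $\a_0=(0)$ and $\a$ is generated by $\a_1$, so $\a=(\ell_1,\dots,\ell_c)$ for linearly independent linear forms $\ell_1,\dots,\ell_c\in B_1$, where $c=\mu_B(\a)=\dim_k\a_1$; then $\height_B\a=c\ge 2$ and $c\le\dim_kB_1=d$. Either way $0<c\le d$ and $\mu_B(S)=c$, which proves (1), the two cases being exactly $c=1$ and $c\ge 2$.

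For the depth assertions, after a linear change of variables write $B=k[X_1,\dots,X_d]$ and $\a=(X_1,\dots,X_c)$. The Koszul complex on $X_1,\dots,X_c$ resolves $B/\a$, so $\pd_B\a=c-1$, and Auslander--Buchsbaum gives $\depth_BS=\depth_B\a=d-c+1$ (the free case $c=1$ being trivial). Since $\fkp=\m T$ annihilates $S$, depth over $T$ with respect to $M$ coincides with depth over $B$ with respect to $B_+$, so $s:=\depth_TS=d-c+1$. Now invoke Proposition~\ref{proposition2}(4): if $c=1$ then $s=d$, whence $\operatorname{depth}G\ge d-1=d-c$; if $c\ge2$ then $s<d$, whence $\operatorname{depth}G=s-1=d-c$. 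This settles (2) and (3) simultaneously.

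It remains to compute the Hilbert function and the $\e_i$. Since $\m S=(0)$ we have $\ell_A(S_n)=\dim_kS_n=\dim_k\a_n$ for all $n$. When $c<d$, $B/\a\cong k[X_{c+1},\dots,X_d]$ gives $\dim_k\a_n=\binom{n+d-1}{d-1}-\binom{n+d-c-1}{d-c-1}$ for all $n\ge 0$; substituting into Proposition~\ref{proposition2}(2) and using $\e_0-\ell_A(A/I)+1=\e_1$ yields the displayed formula of (4) for all $n\ge 0$. When $c=d$ one has $\dim_k\a_n=\binom{n+d-1}{d-1}$ for $n\ge 1$, giving the formula of (5) for $n\ge1$. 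Finally I expand both formulas in the basis $\bigl\{\binom{n+d-i}{d-i}\bigr\}_{0\le i\le d}$ and compare with $\ell_A(A/I^{n+1})=\sum_{i=0}^d(-1)^i\e_i\binom{n+d-i}{d-i}$. In case (4) the term $\binom{n+d-c-1}{d-c-1}=\binom{n+d-(c+1)}{d-(c+1)}$ occupies slot $i=c+1$, forcing $\e_{c+1}=(-1)^{c+1}$ and $\e_i=0$ for the remaining $2\le i\le d$; in case (5) there is no such term, so $\e_i=0$ for all $2\le i\le d$. The only genuinely delicate point is the reduction in the first paragraph: it is the interplay of $\height_B\a\ge2$ with the degree-one generation forced by $I^3=QI^2$ that identifies $\a$ as an ideal generated by a regular sequence of linear forms. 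Once this is in hand, $S$ carries an explicit Koszul resolution and a known Hilbert polynomial, and assertions (2)--(5) reduce to routine bookkeeping with no further obstacle anticipated.
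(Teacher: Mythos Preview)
Your proof is correct and follows essentially the same route as the paper's: identify $S$ with an ideal $\a=(X_1,\dots,X_c)\subseteq B$ generated by linear forms via Theorem~\ref{MainTheoremA} combined with $S=BS_1$, then read off $\mu_B(S)$, $\depth_BS$, $\depth G$ (via Proposition~\ref{proposition2}(4)), and $\ell_A(S_n)$ from this explicit description. The paper's argument is slightly more compressed in that it treats the cases $c=1$ and $c\ge2$ uniformly (noting $(X_1)\cong B(-1)$), whereas you separate them and invoke Koszul/Auslander--Buchsbaum explicitly for the depth, but the substance is identical.
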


\begin{proof}

We have $\m S = (0)$ and $\operatorname{rank}_BS = 1$ by Proposition \ref{proposition2} (3), while $S = TS_1$ since $I^3=QI^2$ (cf. Lemma \ref{lemma1} (5)). Therefore  by Theorem  \ref{MainTheoremA}
we have $S \cong \a$ as graded $B$-modules where $\a = (X_1, X_2, \cdots, X_c)$ is  an ideal in $B$ generated by linear forms $\{X_i\}_{1 \leq i \leq c}$. Hence $0 < c \leq d$, $\mu_B(S) = c$,  and $\operatorname{depth}_BS = d - c + 1$, so that assertions (1), (2), and (3) follow (cf. Proposition \ref{proposition2} (4)).  Considering the exact sequence
$$0 \to S \to B \to B/\a \to 0$$
of graded $B$-modules, we get
\begin{eqnarray*}
\ell_A(S_n)&=&\ell_A(B_n) -\ell_A([B/\a]_{n})\\
            &=&\binom{n+d-1}{d-1} -\binom{n+d-c -1}{d-c-1}\\
\end{eqnarray*}
for all $n \geq 0$ (resp. $n \geq 1$), if $c < d$ (resp. $c = d)$. Thus assertions (4) and (5) follow (cf. Proposition \ref{proposition2} (2)). 
\end{proof}

Let $\tilde{I} = \bigcup_{n \geq 1}[I^{n+1} : I^n]$ be the Ratliff-Rush closure of $I$ (\cite{RR}), which is the largest $\m$-primary ideal in $A$ such that $I \subseteq \tilde{I}$ and $\e_i (\tilde{I}) = \e_i$ for all $0 \leq i \leq d$.

\begin{cor}\label{corollary1}
Suppose that $d \geq 2$. Then the following three conditions are equivalent to each other.
\begin{enumerate}
\item[$(1)$] $S \cong B_+$ as graded $T$-modules.
\item[$(2)$] $\e_1 = \e_0 - \ell_A(A/I) + 1$, $I^3 = QI^2$, and $\e_i = 0$ for all $2 \leq i \leq d$. 
\item[$(3)$] $I^3 = QI^2$, $\ell_A(\tilde{I}/I) =1$, and $\tilde{I}^2 ~= Q\tilde{I}$. 
\end{enumerate}
When this is the case, $\operatorname{depth} G = 0$. 
\end{cor}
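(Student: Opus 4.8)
The plan is to establish the cycle by proving $(1)\Leftrightarrow(2)$ and $(2)\Leftrightarrow(3)$ separately, keeping in mind throughout that $B_+=(X_1,\dots,X_d)$ is precisely the graded ideal of the polynomial ring $B=k[X_1,\dots,X_d]$ generated by all $d$ variables; thus $B_+$ is generated in degree $1$, has $\operatorname{rank}_BB_+=1$, $\mu_B(B_+)=d$, and $\operatorname{ht}_BB_+=d$. In the language of Corollary \ref{Theorem2} this is exactly the extremal case $c=d$, which is what will let me read off both the Hilbert coefficients and the depth of $G$.

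For $(1)\Rightarrow(2)$: since $B_+$ is a $B$-module I get $\m S=(0)$ and $\operatorname{rank}_BS=1$, so Proposition \ref{proposition2}(3) gives $\e_1=\e_0-\ell_A(A/I)+1$; because $B_+$ is generated in degree $1$ I get $S=TS_1$, hence $I^3=QI^2$ by Lemma \ref{lemma1}(5); and $\ell_A(I^2/QI)=\ell_A(S_1)=\dim_k[B_+]_1=d$, so Corollary \ref{Theorem2}(5) forces $\e_i=0$ for $2\le i\le d$. For $(2)\Rightarrow(1)$, Proposition \ref{proposition2}(3) and Lemma \ref{lemma1}(5) give $\m S=(0)$, $\operatorname{rank}_BS=1$, and $S=TS_1$, so Theorem \ref{MainTheoremA} leaves two possibilities. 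The possibility $S\cong B(-1)$ is ruled out because Theorem \ref{Sally}(i) would then force $\e_2=1\ne0$ (here $d\ge2$), contradicting $\e_2=0$. In the remaining case $S\cong\mathfrak{a}$ with $\operatorname{ht}_B\mathfrak{a}\ge2$; since $S=TS_1$, the ideal $\mathfrak{a}$ is generated by linear forms, so $\mathfrak{a}=(X_1,\dots,X_c)$ with $c=\dim_kS_1=\ell_A(I^2/QI)$, and Corollary \ref{Theorem2}(4) shows that $c<d$ would produce $\e_{c+1}=(-1)^{c+1}\ne0$; hence $c=d$ and $S\cong B_+$. The final assertion $\operatorname{depth}G=0$ is then immediate from Corollary \ref{Theorem2}(3) with $c=d\ge2$.

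The direction $(3)\Rightarrow(2)$ is the soft one. Using $\e_i(\tilde I)=\e_i$ for all $i$ and $\ell_A(A/\tilde I)=\ell_A(A/I)-1$, the hypothesis $\tilde I^2=Q\tilde I$ together with Corollary \ref{huneke} applied to $\tilde I$ gives $\e_1(\tilde I)=\e_0-\ell_A(A/\tilde I)$ and $\e_i(\tilde I)=0$ for $2\le i\le d$; translating back to $I$ yields $\e_1=\e_0-\ell_A(A/I)+1$ and $\e_i=0$ for $2\le i\le d$, which with the given $I^3=QI^2$ is exactly $(2)$.

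The real content is $(2)\Rightarrow(3)$, and here the main obstacle is to compute $\ell_A(\tilde I/I)$ exactly. Since $(2)$ already yields $S\cong B_+$ and $I^3=QI^2$, it remains only to prove $\ell_A(\tilde I/I)=1$; granting this, the computation of the previous paragraph read in reverse gives $\e_1(\tilde I)=\e_0-\ell_A(A/\tilde I)$, whence $\tilde I^2=Q\tilde I$ by Corollary \ref{huneke}. To evaluate the length I would use the identification $[\H_M^0(G)]_0\cong\tilde I/I$ (an element of $G_0=A/I$ lies in the $G_+$-torsion of $G$ exactly when it lies in $\bigcup_n(I^{n+1}:I^n)=\tilde I$, and $\H_M^0(G)=\H_{G_+}^0(G)$ because $G_0=A/I$ is Artinian) and then compute $\H_M^0(G)$ through the three exact sequences $0\to IT\to IR\to S\to0$, $0\to R_+\to R\to A\to0$, and $0\to IR\to R\to G\to0$ from the proof of Proposition \ref{proposition2}(4). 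From $0\to B_+\to B\to k\to0$ and $S\cong B_+$ one reads off $\H_M^0(S)=(0)$ and $\H_M^1(S)\cong k$ in degree $0$; the first sequence then gives $\H_M^1(IR)\cong k$ in degree $0$ and $\H_M^0(IR)=(0)$, and the isomorphism $IR\cong(R_+)(1)$ together with the second sequence gives $\H_M^1(R)\cong k$ in degree $1$ and $\H_M^0(R)=(0)$. Feeding these into the long exact sequence of the third,
$$0\to\H_M^0(G)\to\H_M^1(IR)\to\H_M^1(R),$$
the map on the right is a degree-preserving map from a module concentrated in degree $0$ to one concentrated in degree $1$, hence zero; therefore $\H_M^0(G)\cong\H_M^1(IR)\cong k$ is concentrated in degree $0$ and $\ell_A(\tilde I/I)=\ell_A([\H_M^0(G)]_0)=1$. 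I expect the delicate point to be exactly this degree bookkeeping — verifying that $\H_M^1(IR)$ sits in degree $0$ while $\H_M^1(R)$ sits in degree $1$, which is what forces the connecting map, and hence the value $\ell_A(\tilde I/I)=1$, to come out right.
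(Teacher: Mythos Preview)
Your proof is correct and follows essentially the same route as the paper: the equivalence $(1)\Leftrightarrow(2)$ via Corollary~\ref{Theorem2} with $c=d$, the direction $(3)\Rightarrow(2)$ via Corollary~\ref{huneke} applied to $\tilde I$, and the crucial step $(2)\Rightarrow(3)$ via the local cohomology of the exact sequences $0\to IT\to IR\to S\to0$ and $0\to IR\to R\to G\to0$ together with $\H_M^1(S)\cong B/B_+$.

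The only tactical difference is in how you pin down $\H_M^0(G)$. The paper, having already recorded $\operatorname{depth}G=0$, simply notes that $\H_M^0(G)$ is a nonzero submodule of the length-one module $\H_M^1(IR)\cong B/B_+$, hence equal to it. You instead push one step further, computing $\H_M^1(R)\cong\H_M^1(R_+)\cong\H_M^1(IR)(-1)$ so that it sits in degree~$1$, and then use the degree mismatch to kill the map $\H_M^1(IR)\to\H_M^1(R)$. Both arguments are valid; the paper's is marginally shorter, while yours has the small advantage of not relying on the depth computation at that point and of making explicit the degree bookkeeping you flagged as the delicate step.
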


\begin{proof}
Let $c = \ell_A(I^2/QI)$.

$(1) \Rightarrow (2)~and~the~last~assertion$  This follows from Corollary  \ref{Theorem2}. Notice that $c= \ell_A(S_1) = d$ and $I^3 = QI^2$, because $S \cong B_+$.

$(2) \Rightarrow (1)$ We have $c = d$ by Corollary \ref{Theorem2} (4), (5),  because $\e_i = 0$ for all $2 \leq i \leq d$, so that $S \cong B_+$ (see Proof of Corollary \ref{Theorem2}).

$(2) \Rightarrow (3)$ We have $\operatorname{depth} G = 0$ by Corollary \ref{Theorem2} (3), since $c=d$. Now we apply local cohomology functors ${\H}_M^i(*)$ of $T$ with respect to the graded maximal ideal $M = \m T + T_+$ to the exact sequences
$$0 \to IR \to R \to G \to 0~~~\operatorname{and}~~~0 \to IT \to IR \to S \to 0$$
of graded $T$-modules and we have the monomorphism $${\H}_M^0(G) \hookrightarrow {\H}_M^1(IR)$$ and the isomorphisms $${\H}_M^1(IR) \cong {\H}_M^1(S) \cong B/B_+$$ of graded $T$-modules (recall that $S \cong B_+$ and $IT$ is a Cohen-Macaulay $T$-module with $\dim_TIT = d+1$). Consequently, because  ${\H}_M^0(G) \ne (0)$ and $\ell_A(B/B_+)=1$, we get
$${\H}_M^0(G) \cong {\H}_M^1(IR) \cong {\H}_M^1(S) \cong B/B_+,$$
whence ${\H}_M^0(G) = [{\H}_M^0(G)]_0 \ne (0)$. Thus $\ell_A(\tilde{I}/I) = 1$ since $[{\H}_M^0(G)]_0 \cong \tilde{I}/I$. Therefore it follows from the equality $\e_1 = \e_0-\ell_A(A/I) + 1$ that 
$$\e_1(\tilde{I})= \e_0(\tilde{I}) - \ell_A(A/\tilde{I}),$$
because $\e_i (\tilde{I}) = \e_i$ for $i =0,1$ and $\ell_A(A/I) = \ell_A(A/\tilde{I}) + 1$. Hence $\tilde{I}^2 = Q\tilde{I}$ by Corollary \ref{huneke}.

$(3) \Rightarrow (2)$ We have $\e_1 = \e_0 - \ell_A(A/I) + 1$ and $\e_i = 0$ for all $2 \leq i \leq d$, since $\e_1(\tilde{I})= \e_0(\tilde{I}) - \ell_A(A/\tilde{I}) = \e_0 - \ell_A(A/I ) + 1$ and $\e_i(\tilde{I}) = 0$ for all $2 \leq i \leq d$ (cf. Corollary \ref{huneke}).
\end{proof}

Let us include a proof of Theorem \ref{Sally} in our context, in order to show how our arguments work. 

\begin{proof}[Proof of Theorem \ref{Sally}]

$(1) \Rightarrow (3)$ See Lemma \ref{lemma1} (2), (5). 

$(3) \Rightarrow (1)$ By Lemma \ref{lemma1} (5) we have $S = TS_1$, whence $\m S = (0)$ because $S_1 \cong I^2/QI$ and $\ell_A(I^2/QI)=1$. Therefore we have an epimorphism $B(-1) \to S \to 0$ which has to be an  isomorphism, since $\dim_TS=d$. 

$(1) \Rightarrow (2)~and~the~last~assertions$  We have $I^3=QI^2$ since $S = TS_1$, whence the assertions follows from Corollary \ref{Theorem2} (notice that $c = 1$).

$(2) \Rightarrow (1)$ We have $\m S = (0)$ and $\operatorname{rank}_BS=1$ by Proposition \ref{proposition2} (3), while the $B$-module $S$ is torsionfree by Proposition \ref{proposition2} (1). Hence $S$ is $B$-free if $d=1$, so that $S \cong B(-1)$ as graded $T$-modules (notice that $S_1 \ne (0)$).

Assume that $d=2$. Then we have  an exact sequence
$$0 \to B(-1) \to S \to C \to 0 \leqno{(a)}$$
of graded $B$-modules with $\dim_BC \leq 1$. Therefore $\ell_A(S_n) = \ell_A(B_{n-1}) + \ell_A(C_n) = 
\binom{n}{1} + \ell_A(C_n)$ for all $n \geq 1$, so that by Proposition \ref{proposition2} (2)
$$\ell_A(A/I^{n+1}) = \e_0 \binom{n+2}{2} - (\e_0 - \ell_A(A/I) + 1)\binom{n+1}{1} + (1 - \ell_A(C_n)).$$ Consequently $\e_2 = 1 - \ell_A(C_n) > 0$ by Narita's theorem \cite{Na}. Hence $\ell_A(C_n)  = 0$ for all $n \geq 1$. Thus $\ell_A(C) \leq 1$, so that $C = (0)$ by exact sequence (a).

Now let $ d \geq 3$ and assume that our assertion holds true for $d - 1$. Choose the element $a_1 \in Q$ so that $a_1$ is a superficial element of $I$ (this choice is possible, because the field $A/\m$ is infinite). Let $\overline{A} = A/(a_1)$, $\overline{I} = I/(a_1)$, and $\overline{Q} = Q/(a_1)$. Then all the assumptions of condition (2) are safely fulfilled for the ideal $\overline{I}$ in $\overline{A}$, since $\e_i(\overline{A}) = \e_i$ for all $0 \leq i \leq d-1$. Consequently the hypothesis of induction yields that $\operatorname {depth} \mathrm{G}(\overline{I}) \geq (d-1)-1 = d - 2 > 0$ and so, thanks to Sally's technique \cite{S3}, we see that $a_1t$ is a nonzerodivisor for $G$, whence $I^3 = QI^2$ because $\overline{I}^3 =\overline{Q}~\overline{I}^2$. Thus $S \cong B(-1)$ as graded $B$-modules by Corollary \ref{Theorem2} (notice that $c = 1$). 
\end{proof}

\section{Proof of Theorem \ref{MainTheorem}}
The purpose of this section is to prove Theorem \ref{MainTheorem}. Let us begin with the following.

\begin{thm}\label{d=2}
Suppose that $d = 2$. Then the following three conditions are equivalent to each other.
\begin{enumerate}
\item[$(1)$] $\e_1= \e_0 - \ell_A(A/I) + 1$.
\item[$(2)$] Either $S \cong B(-1)$ as graded $T$-modules or $S \cong B_+$ as graded $T$-modules.
\item[$(3)$] Either $(a)$ $I^3 = QI^2$ and $\ell_A(I^2/QI) = 1$, or $(b)$ $\ell_A(\tilde{I}/I) = 1$ and $\tilde{I}^2 = Q\tilde{I}$.
\end{enumerate}
We get $\e_2 = 1$ $(resp.~\e_2=0)$ if condition $(3)~(a)$ $(resp.~condition~(3)~ (b))$ is satisfied.
 and furthermore have the following
$$\begin{array}{c|c|c|cc}
\mathrm{e}_2 & \mathrm{r}_Q(I) & \depth_B$S$ & \depth $G$ &  \\
\hline 
1 & 2 & 2 & 2 & if~Q\nsupseteq I^2 \\
\hline
1 & 2 & 2 & 1 & if~Q\supseteq I^2 \\
\hline
0 & 2 & 1 & 0 & $G$\ \text{is a Buchsbaum ring with}\ \Bbb{I}($G$)=2.
\end{array}$$

\end{thm}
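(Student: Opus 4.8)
The plan is to prove the cyclic chain of implications, isolating the one genuinely two-dimensional input. First I would dispose of the easy directions. Since $B(-1)$ and $B_+$ are torsionfree $B$-modules of rank one annihilated by $\m$, condition $(2)$ forces $\m S=(0)$ and $\rank_BS=1$, whence $(1)$ by Proposition \ref{proposition2}(3). For $(3)\Rightarrow(1)$: in case $(3)(a)$, Theorem \ref{Sally} gives $S\cong B(-1)$ and hence $\e_1=\e_0-\ell_A(A/I)+1$; in case $(3)(b)$, the hypothesis $\tilde{I}^2=Q\tilde{I}$ together with Corollary \ref{huneke} applied to $\tilde{I}$ gives $\e_1(\tilde{I})=\e_0(\tilde{I})-\ell_A(A/\tilde{I})$, and feeding in $\e_i(\tilde{I})=\e_i$ and $\ell_A(A/I)=\ell_A(A/\tilde{I})+1$ yields $(1)$.

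Next I would treat $(1)\Rightarrow(3)$. Assuming $(1)$, Proposition \ref{proposition2}(3) gives $\m S=(0)$, $\rank_BS=1$, and by Narita's theorem $\e_2\ge 0$. If $\e_2\ne 0$, Theorem \ref{Sally} delivers $S\cong B(-1)$ and $(3)(a)$. If $\e_2=0$, Theorem \ref{MainTheoremA} leaves only $S\cong\a$ for a graded ideal $\a\,(\ne B)$ with $\height_B\a\ge 2$; since $d=2$ this $\a$ is $B_+$-primary. Here I would compute $\H^0_M(G)$ in degree zero: from $0\to IT\to IR\to S\to 0$ and $0\to IR\to R\to G\to 0$, using that $IT$ is Cohen--Macaulay, one gets $\H^1_M(IR)\cong\H^1_M(S)\cong B/\a$ and an injection $\H^0_M(G)\hookrightarrow\H^1_M(IR)$ whose degree-zero part is onto $[B/\a]_0=k$ because $[\H^1_M(R)]_0=0$. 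As $[\H^0_M(G)]_0\cong\tilde{I}/I$, this gives $\ell_A(\tilde{I}/I)=1$, and then Corollary \ref{huneke} forces $\tilde{I}^2=Q\tilde{I}$, i.e. $(3)(b)$. This already yields $(1)\Leftrightarrow(3)$, and, via $\e_i(\tilde{I})=0$ for $i\ge 2$, the asserted values $\e_2=1$ in case $(a)$ and $\e_2=0$ in case $(b)$.

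The implication that needs a genuinely two-dimensional argument is $(1)\Rightarrow(2)$ when $\e_2=0$: I must upgrade $S\cong\a$ to $\a=B_+$, equivalently prove $I^3=QI^2$, so that $S=TS_1$ is generated in degree one and the $B_+$-primary $\a$, being generated by linear forms, must contain two independent ones and hence equal $B_+$. My approach is to cut by a superficial element $a_1$: in $\bar{A}=A/(a_1)$ the ideal $\bar{I}$ satisfies $\e_1(\bar{I})=\e_0(\bar{I})-\ell_A(\bar{A}/\bar{I})+1$, so by the one-dimensional case of Theorem \ref{Sally} one has $\bar{I}^3=\bar{a}_2\bar{I}^2$, that is $I^3\subseteq a_2I^2+(a_1)$. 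The remaining task --- removing $a_1$, i.e. arranging a representation $w=a_2u+a_1v$ of an element $w\in I^3$ with $v\in I^2$ --- is the main obstacle, because $\depth G=0$ means $a_1^{\ast}$ is a zerodivisor on $G$ and $I^3:a_1=I^2$ need not hold. I expect to resolve it using the colength-one structure $\tilde{I}=I+Az$ with $\m z\subseteq I$ together with $\tilde{I}^2=Q\tilde{I}$ (equivalently the Cohen--Macaulayness of $G(\tilde{I})$, which holds since $\mathrm{r}_Q(\tilde{I})=1$), reducing everything to the containment $Qz\subseteq I^2$, where the two regular parameters $a_1^{\ast},a_2^{\ast}$ of $G(\tilde{I})$ are brought to bear. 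Once $I^3=QI^2$ is in hand, Corollary \ref{corollary1} (or Corollary \ref{Theorem2}) completes $(1)\Leftrightarrow(2)$.

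Finally, for the table I would read off $\mathrm{r}_Q(I)=2$ in every case from Lemma \ref{lemma1}(4), since both $B(-1)$ and $B_+$ are generated in degree one; compute $\depth_BS$ directly ($B(-1)$ is free, so $\depth_BS=2$, while $\depth_BB_+=1$); and obtain $\depth G$ from Proposition \ref{proposition2}(4), namely $\depth G=0$ when $\e_2=0$ and $\depth G\ge 1$ when $\e_2=1$. Within the case $\e_2=1$ the dichotomy follows from Valabrega--Valla: as $\ell_A(I^2/QI)=1$ and $\mathrm{r}_Q(I)=2$, one has $Q\cap I^2=QI$ exactly when $I^2\nsubseteq Q$, so $G$ is Cohen--Macaulay ($\depth G=2$) iff $Q\nsupseteq I^2$ and $\depth G=1$ iff $Q\supseteq I^2$. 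For the last row I would compute all $\H^i_M(G)$ from the sequences above in the case $S\cong B_+$, where $B/\a=k$: this gives $\H^0_M(G)\cong B/B_+\cong k$ and, tracking the connecting maps (which vanish in the relevant degrees for grading reasons), $\H^1_M(G)$ again of length one, both annihilated by $M$; the Buchsbaum criterion then yields that $G$ is Buchsbaum with $\Bbb{I}(G)=\ell_A(\H^0_M(G))+\ell_A(\H^1_M(G))=2$.
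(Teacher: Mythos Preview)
Your route $(1)\Rightarrow(3)$ via local cohomology is correct and is in fact cleaner than the paper's, which reaches $(3)$ only after first proving $(2)$; your degree-zero computation $[\H^0_M(G)]_0\cong [B/\a]_0=k$ (using $[\H^1_M(R)]_0\cong[\H^1_M(IR)]_{-1}\cong[B/\a]_{-1}=0$) is valid.

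The genuine gap is in your argument for $I^3=QI^2$ when $\e_2=0$. Your reduction to the containment $Qz\subseteq I^2$ is circular. Comparing Hilbert functions (Proposition~\ref{proposition2}(2) applied to $I$ and to $\tilde I$, using $\tilde I^{\,2}=Q\tilde I$) gives $\ell_A(\tilde I^{\,2}/I^2)=2-\ell_A(I^2/QI)$, so $Qz\subseteq I^2$ (equivalently $\tilde I^{\,2}=I^2$) holds precisely when $\ell_A(I^2/QI)=2$, i.e.\ when $\a_1=B_1$, which is exactly the conclusion $\a=B_+$ you are after. The case that must be excluded is $\ell_A(I^2/QI)=1$, where $\a=(f,g)$ with $\deg f=1$, $\deg g=2$; here $\ell_A(\tilde I^{\,2}/I^2)=1$, so $Qz\not\subseteq I^2$ and your generic superficial $a_1$ need not satisfy $a_1z\in I^2$. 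The paper handles this case by a different mechanism: from the resolution $0\to B(-3)\to B(-1)\oplus B(-2)\to S\to 0$ it extracts a \emph{specific} $a\in Q\setminus\m Q$ (a lift of the linear form $f$) and a specific $h\in\tilde I\setminus I$ with the crucial property $ah\in I^2\setminus QI$; then $I^2=QI+(ah)$ forces $\bar I^{\,2}=\bar Q\,\bar I$ in $A/(a)$ with no superficiality needed, and $(a)\cap I^3\subseteq a\tilde I^{\,2}=aQI+(ah)Q\subseteq QI^2$ gives the contradiction $I^3=QI^2$. (The paper also remarks that $I^3=QI^2$ follows directly from Rossi's bound $r_Q(I)\le \e_1-\e_0+\ell_A(A/I)+1$.)

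For the Buchsbaum assertion the paper takes a shorter path than tracking connecting maps in your sequences: once $\tilde I^{\,n}=I^n$ for $n\ge 2$ (which uses $\H^0_M(G)=[\H^0_M(G)]_0$, available only after $S\cong B_+$), the quotient $W=\mathrm{R}'(\tilde I)/R'$ has length one; since $\mathrm{R}'(\tilde I)$ is Cohen--Macaulay, the long exact sequence of $0\to R'\to \mathrm{R}'(\tilde I)\to W\to 0$ gives $\H^i_N(R')=0$ for $i\ne 1,3$ and $\H^1_N(R')\cong W$, so $R'$ (hence $G=R'/t^{-1}R'$) is Buchsbaum with invariant $2$.
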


\begin{proof}
$(1) \Rightarrow (2)$ Thanks to Corollary \ref{Theorem2} and its proof, we have only to show that $I^3 =QI^2$. This equality directly follows from a result of M. Rossi \cite[Corollary 1.5]{R}. Let us note a proof in our context for the sake of completeness.

We have $\m S = (0)$ and $\operatorname{rank}_BS = 1$. Assume that $S \not\cong B(-1)$ as graded $B$-modules. Then by Theorem \ref{MainTheoremA} we have $S \cong \a$ as graded $B$-modules for some graded ideal $\a \ne B$ with $\operatorname{ht}_B\a =2$. We will show that $\a = B_+$. Since $\a_1 \cong S_1 \neq (0)$, the ideal $\a$ contains a linear form $f \ne 0$ of $B$, so that the ideal $\a/(f)$ of $B/(f)$ is principal, since $B/(f)$ is the polynomial ring  with one indeterminate over the field $k=A/\fkm$. We write $\a=(f,g)$ with a  form $g \in B$. Then $f, g$ is a regular sequence in $B$, since ${\height}_B\a = 2$. Let $\alpha={\deg}~g$. Then $\alpha \leq 2$ by Lemma \ref{lemma1} (4). We will show that $\alpha=1$. 

Assume that $\alpha =2$. Then, since $S \cong \a =(f,g)$, the graded $B$-module $S$ has a resolution of the form
$$0 \to B(-3) \overset{\binom{g}{f}}{\to} B(-1) \oplus B(-2) \overset{\varphi = 
\begin{pmatrix}
\xi & \eta
\end{pmatrix}
}{\longrightarrow} S \to 0,$$
in which the homomorphism $\varphi$ is defined by $\varphi({\rm{\bf{e}}}_1) = \xi \in S_1$ and $\varphi({\rm{\bf{e}}}_2) = \eta \in S_2$ (here $\{\rm{\bf{e}}_1, \rm{\bf{e}}_2\}$ denotes the standard basis of $B(-1) \oplus B(-2)$). Let $a \in Q$, $c \in Q^2$, $x \in I^2$, and $y \in I^3$ such that
$f$  and $g$ are, respectively, the images of $at$ and $ct^2$ in $B$ and
$\xi$ and $\eta$ are, respectively, the images of $xt$ and $yt^2$ in $S$. We notice that $a \notin \fkm Q$ so that $Q=(a,b)$ for some $b \in Q$. Hence $c=a^2 z_1+ab z_2+b^2 z_3$
for some $z_1$, $z_2$, and $z_3 \in A$.

Let us now consider the relation $g \xi+ f \eta=0$ in $S_3$, that is, $cx+ay \in Q^3I$.
We write $cx+ay=(a^2 z_1+ab z_2+b^2 z_3)x + ay = a^2i+b^2j$ with $i$, $j \in QI$ (recall that $Q^3=(a^2,b^2)Q)$. We then have that $ay'=b^2x'$, where $y'=y+az_1x+bz_2x-ai$ and $x'=j-z_3x$. Therefore $x'=ah$ and $y'=b^2h$ for some $h \in A$, because the sequence $a$, $b^2$ is $A$-regular. Hence $h \in I^3 : (a^2,b^2) \subseteq \widetilde{I}$, because $a^2h = ax' \in I^3$ and $b^2h = y' \in I^3$. Now notice that $S= B\xi + B\eta$. We then have $S_1=B_0 \xi$ and $S_2=B_1S_1+B_0\eta$, whence $\ell_A(I^2/QI)=1$ and $I^3=QI^2+(y)$.

We need the following.

\begin{claim}\label{claim1}
$h \not\in I$ and $x' = ah \not\in QI$.
\end{claim}

\begin{proof}
Assume that $h \in I$.
Then $y'=b^2h \in Q^2I$  so that $y=y'-az_1x-bz_2x+ai \in QI^2$, whence $I^3=QI^2+(y)=QI^2$. This forces $S=BS_1$, which is impossible because $\alpha = 2$. 
Thus $h \notin I$. Suppose $ah \in QI$ and let $ah = ai_1 +bi_2$ with $i_1, i_2 \in I$. Then $a(h -i_1) = bi_2$ and so $h -i_1 \in (b)$. Hence $h \in I$, which is impossible.
\end{proof}

Because $\ell_A(\tilde{I}/I) \geq 1$ by this claim, we get the following.
\begin{eqnarray*}
\e_1&=&\e_0-\ell_A(A/I)+1 \\
&=&\e_0(\widetilde{I})-\ell_A(A/\widetilde{I})-(\ell_A(\widetilde{I}/I)-1)\\
&\leq& \e_0(\widetilde{I})-\ell_A(A/\widetilde{I})\\
&\leq& \e_1(\widetilde{I}) \\
&=&\e_1,
\end{eqnarray*}
where $\e_0(\tilde{I}) - \ell_A(A/\widetilde{I}) \leq \e_1(\widetilde{I}) $ is the inequality of Northcott for the ideal $\tilde{I}$ (cf. Corollary \ref{huneke}).  Then we have $\ell_A(\widetilde{I}/I)=1$ and
$\e_1(\widetilde{I})=\e_0(\widetilde{I})-\ell_A(A/\widetilde{I})$, so that 
$\widetilde{I}=I+(h)$ and $\widetilde{I}^2=Q\widetilde{I}$ by Corollary \ref{huneke}, since $Q$ is also a reduction of $\widetilde{I}$. Thus the associated graded ring of $\widetilde{I}$ is a Cohen-Macaulay ring and so
$(a) \cap \widetilde{I}^n=a\widetilde{I}^{n-1}$ for all $n \in \Z$, because $at$ is $\mathrm{G}(\tilde{I})$-regular.

Now recall that $x'= ah  \notin QI$ and we have $I^2=QI+(ah)$, since $\ell_A(I^2/QI) =1$. Let $\overline{A}=A/(a)$, $\overline{I}=I/(a)$, and $\overline{Q}=Q/(a)$.
Then $\overline{I}^2=\overline{Q}\,\overline{I}$,
and so $\overline{I}^3=\overline{Q}~{\overline{I}}^2$, whence $I^3 \subseteq QI^2+(a)$. Thus $I^3 = QI^2 + [(a) \cap I^3]$. On the other hand 
$$(a) \cap I^3 \subseteq (a) \cap \widetilde{I}^3=a\widetilde{I}^2=aQ\widetilde{I}
=(aQ)(I+(h))=(aQ)I+x'Q \subseteq QI^2,$$
whence $I^3=QI^2$ so that $\alpha = 1$, which is the required contradiction. Thus $S = BS_1$ and  $S \cong B_+$.

$(2) \Rightarrow (3)$ See Theorem \ref{Sally} and Corollary \ref{corollary1}.

$(3) \Rightarrow (1)$  If condition (a) is satisfied, we have by Theorem \ref{Sally} assertion (1). Suppose condition (b) is satisfied. Then $\e_1 = \e_1(\tilde{I}) = \e_0(\tilde{I}) - \ell_A(A/\tilde{I}) = \e_0 - \ell_A(A/I) + 1$ (cf. Corollary \ref{huneke}). 

We now consider the last assertions. Suppose condition (3) (a) is satisfied. Then $\e_2=1$ by Theorem \ref{Sally}. If $Q \supseteq I^2$, then $I^2 =Q \cap I^2 \ne QI$, so that $G$ is not a Cohen-Macaulay ring. If $Q \not\supseteq I^2$, then $Q \cap I^2 = QI$ because $\ell_A(I^2/QI) = 1$ and $I^2 \supsetneq Q \cap I^2 \supseteq QI$. Since $I^3 = QI^2$, this yields $G$ is a Cohen-Macaulay ring.

Suppose condition (3) (b) is satisfied. Then, since  $\tilde{I}^2 = Q\tilde{I}$, $\e_2=0$ by Corollary \ref{huneke}  (recall that $\e_2(\tilde{I}) = \e_2$) and $\mathrm{R}'(\tilde{I})$ is a Cohen-Macaulay ring.  We furthermore have the following.

\begin{claim}\label{claim2}
$\tilde{I}^n = I^n$ for all $n \geq 2$.
\end{claim}

\begin{proof}
We have $S \cong B_+$ as graded $T$-modules, because $\e_2=0$. Hence ${\H}_M^0(G) = [{\H}_M^0(G)]_0$, thanks to Proof of Corollary \ref{corollary1}, $(2) \Rightarrow (3)$. Let $n \geq 2$ be an integer. We then have 
$$[\widetilde{I^n} \cap I^{n-1}]/I^n \cong [{\H}_M^0(G)]_{n-1} = (0).$$
Consequently $\tilde{I}^n = I^n$, because $\tilde{I}^n \subseteq \widetilde{I^n} \cap I^{n-1}$ (recall that $\tilde{I}^n = Q^{n-1}\tilde{I}$, since $\tilde{I}^2 =Q\tilde{I}$). Thus $\tilde{I}^n = I^n$ for all $n \geq 2$.
\end{proof}

We put $W = \mathrm{R}'(\tilde{I})/R'$ and look at the exact sequence
$$
0 \to R' \to \mathrm{R}'(\tilde{I}) \to \mathrm{R}'(\tilde{I})/R' \to 0 \leqno{(\sharp{})}
$$
of graded $R'$-modules. Notice that $W = W_1 \cong \tilde{I}/I$ by Claim \ref{claim2} whence $\ell_A(W) = 1$. Let $N = (\m , R_+, t^{-1})R'$ be the unique graded maximal ideal in $R'$. Then because $\mathrm{R}'(\tilde{I})$ is a Cohen-Macaulay ring,  applying functors ${\H}_N^i(*)$ to the exact sequence $(\sharp{})$, we see that ${\H}_N^i(R') = (0)$ for all $i \ne 1, 3$ and ${\H}_N^1(R') = W$. Thus $R'$ is a Buchsbaum ring with the Buchsbaum invariant 
$$\Bbb{I} (R') = \sum_{i=0}^2\binom{2}{i}\ell_A({\H}_N^i(R')) = 2,$$  whence so is the graded ring $G=R'/t^{-1}R'$. We similarly have that $R$ is a Buchsbaum ring with $\Bbb{I} (R) = 2$, because $\mathrm{R}(\tilde{I})$ is a Cohen-Macaulay ring and $\mathrm{R}(\tilde{I})/R =[\mathrm{R}(\tilde{I})/R]_0 \cong \tilde{I}/I$. This completes the proof of Theorem \ref{d=2}.
\end{proof}

We are now in a position to prove Theorem \ref{MainTheorem}.

\begin{proof}[Proof of Theorem \ref{MainTheorem}]
$(1) \Rightarrow  (3)$ 
We have $\e_1 = \e_0 - \ell_A(A/I) + 1$ by Proposition \ref{proposition2} (3) and so $\e_2 = 0$ by Theorem \ref{Sally}. Because $S \not\cong B(-1)$, by Theorem  \ref{MainTheoremA} we get $S \cong \a$ as graded $B$-modules for some graded ideal $\a~(\ne B)$ in $B$ with ${\height}_B\a \geq 2$. Since $\mu_B(\a) = \mu_B(S) =2$, the ideal $\a$ is a complete intersection with $\mathrm{ht}_B\a = 2$, so that $\operatorname{depth}_BB/\a= d-2$, whence $\operatorname{depth}_BS = d-1$. Thus $\operatorname{depth} G = d - 2$ by Proposition \ref{proposition2} (4).

$(3) \Rightarrow  (2)$
First of all let us show that $I^3 = QI^2$. 
Thanks to Theorem \ref{d=2}, we may assume that $d \geq 3$ and our assertion holds true for $d-1$. Since $\operatorname{depth} G \geq d-2 >0$, we may choose $a_1 \in Q$ so that $a_1t$ is a nonzerodivisor in $G$. Let $\overline{A} = A/(a_1), \overline{I} = I/(a_1),$ and $\overline{Q}  = Q/(a_1)$. Then, because $\mathrm{G}(\overline{I}) \cong G/a_1t{\cdot}G$ and $\e_i(\overline{I})=\e_i$ for all $0 \leq i \leq d-1$, we see condition (3) is satisfied for the ideal $\overline{I}$, so that $\overline{I}^3 = \overline{Q}~\overline{I}^2$ whence $I^3 = QI^2$.  Therefore, since $\e_2=0$, we see in Corollary  \ref{Theorem2} that $c = \mu_B(S) = 2$, whence assertion (2) follows (cf. Proof of Corollary \ref{Theorem2}).

$(2) \Rightarrow  (4)$
We have $\m S =(0)$, $S = TS_1$, and $S_1 \cong B_0^2$. Hence $\m I^2 \subseteq QI$, $I^3 = QI^2$, and $\ell_A(I^2/QI) = \ell_A(S_1) = 2$. We similarly have $$\ell_A(I^3/Q^2I) = \ell_A(S_2) = 2\ell_A(B_1) - \ell_A(B_0) = 2d -1 < 2d.$$

$(4) \Rightarrow  (1)$
We have $S=TS_1$ and so $\m S = (0)$, since $\m S_1 =(0)$. Because $\ell_A(S_1) = 2$, we have an epimorphism $B(-1)^2 \to S \to 0$ of graded $B$-modules, which cannot be an isomorphism since $\ell_A(S_2) = \ell_A(I^3/IQ^2) < 2d$. Thus $\operatorname{rank}_BS = 1$, so that we have $\mu_B(S) = 2$ by Corollary  \ref{Theorem2}.

See Theorem \ref{d=2} for the equivalence between condition (5) and the others.
See Corollary  \ref{Theorem2} and Proof of Theorem \ref{d=2} for the last assertions.
\end{proof}

We note the following.

\begin{ex}\label{ex1}
Let $A=k[[ X, Y, Z_1, Z_2, \cdots, Z_m ]]$~$(m \geq 0)$
be the formal power series ring over a field $k$. Hence $\dim A = m+2$. 
We put
$$ Q=(X^4,Y^4,Z_1,Z_2,\cdots,Z_m) \mbox{  and  } I=Q+(X^3Y,XY^3).$$
Then
$$\mbox{$\fkm I^2 \subseteq QI$, $\ell_A(I^2/QI)=2$, $\ell_A(I^3/Q^2I)<2d$, and $I^3=QI^2,$}$$
where $d = m + 2$. 
Hence condition (4) in Theorem \ref{MainTheorem} is satisfied, so that $\m S = (0),~\operatorname{rank}_BS = 1$, and $\mu_B(S) = 2$.
We have $\mbox{$\ell_A(A/Q)=16$ and $\ell_A(A/I)=11$}$
and 
$$\ell_A(A/I^{n+1})=16\binom{n+2}{2}-6\binom{n+1}{1}$$
for all $n \geq 1$, if $m=0$.
If $m \geq 1$, we get
$$\ell_A(A/I^{n+1})=16\binom{n+d}{d}-6\binom{n+d-1}{d-1}+\binom{n+d-3}{d-3} $$
for all $n \geq 0$, whence $\e_3=-1$ and $\e_i = 0$ ($2 \leq i \leq d, i \ne 3$).
\end{ex}

\begin{proof}
Because $G = \mathrm{G}((X^4, X^3Y, XY^3, Y^4))[Z_1, Z_2, \cdots, Z_m]$~(the polynomial ring), the case where $m >0$ follows easily from the case $m = 0$ (see Theorem \ref{MainTheorem} (3)).  Let $m = 0$. Then $I^2=QI+(X^6Y^2, X^2Y^6)$. It is routine to show that $\fkm I^2 \subseteq QI$,
$\ell_A(I^2/QI)=2$, and $I^3=QI^2$.
We have
$QI^2=Q^2I+(X^{10}Y^2,X^6Y^6,X^2Y^{10})$,
whence $\ell_A(I^3/Q^2I) = 3$. 
\end{proof}

Before closing this section, let us study ideals with $\e_1 =2$.

\begin{thm}\label{e2}
Suppose that $\e_1=2$ and $I^2 \neq QI$.
Then the following assertions hold true.
\begin{enumerate}
\item[$(i)$] $\ell_A(I/Q)=\ell_A(I^2/QI)=1$.
\item[$(ii)$] $I^3=QI^2$.
\item[$(iii)$] $S \cong B(-1)$ as graded $T$-modules.
\item[$(iv)$] $\operatorname{depth} G=d-1$.
\item[$(v)$] $\e_2=1$, if $d \geq 2$ and $\e_i = 0$ for $3 \leq i \leq d$, if $d \geq 3$.
\end{enumerate}
\end{thm}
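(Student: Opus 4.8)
The plan is to establish $(i)$ first, use it to reach the crux $(ii)$ by induction on $d$, and then read off $(iii)$, $(iv)$, $(v)$ formally from Theorem \ref{Sally}.

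First I would extract the numerical data. Since $I^2 \ne QI$ we have $I \ne Q$, so $\e_0 - \ell_A(A/I) = \ell_A(I/Q) \ge 1$, while Corollary \ref{huneke} gives the strict inequality $\e_1 > \e_0 - \ell_A(A/I)$. Feeding in $\e_1 = 2$ squeezes both inequalities to equalities, so $\ell_A(I/Q) = 1$ and $\e_1 = \e_0 - \ell_A(A/I) + 1$; Proposition \ref{proposition2} (3) then yields $\m S = (0)$ and $\operatorname{rank}_B S = 1$. The equality $\ell_A(I/Q) = 1$ means $\m I \subseteq Q$ and $I = Q + (x)$ for some $x \in I$, whence $I^2 = QI + (x^2)$ and, since $I^2 \ne QI$, $\ell_A(I^2/QI) = 1$; this proves $(i)$. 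I would also record here that $x \in \m$ forces $x^2 \in \m I \subseteq Q$, so that $I^2 \subseteq Q$, a fact to be reused for $(iv)$.

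The hard part will be $(ii)$, namely $I^3 = QI^2$, which I would prove by induction on $d$. For $d = 1$ the ring $B$ is a polynomial ring in one variable, so the torsionfree rank-one $B$-module $S$ is free, $S \cong B(-1)$, and $S = TS_1$ gives $I^3 = QI^2$ via Lemma \ref{lemma1} (5). For $d = 2$, condition $(1)$ of Theorem \ref{d=2} holds, so $S \cong B(-1)$ or $S \cong B_+$; but $\dim_k S_1 = \ell_A(I^2/QI) = 1$ whereas $[B_+]_1$ is $2$-dimensional, so only $S \cong B(-1)$ survives and again $I^3 = QI^2$. For $d \ge 3$ I would choose $a_1 \in Q$ superficial for $I$ and pass to $\bar A = A/(a_1)$, $\bar I = I/(a_1)$, $\bar Q = Q/(a_1)$, with $\dim \bar A = d-1 \ge 2$ and $\e_i(\bar I) = \e_i$ for $i \le d-1$. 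Since $\e_1(\bar I) = 2 \ne 1 = \e_0(\bar I) - \ell_{\bar A}(\bar A/\bar I)$, Corollary \ref{huneke} forces $\bar I^2 \ne \bar Q\,\bar I$, so the induction hypothesis delivers $\bar I^3 = \bar Q\,\bar I^2$ together with $\operatorname{depth} \mathrm{G}(\bar I) = (d-1)-1 = d-2 > 0$. With this strict positivity of depth, Sally's technique \cite{S3} shows $a_1 t$ is a nonzerodivisor on $G$, so $(a_1) \cap I^3 = a_1 I^2$, and lifting $\bar I^3 = \bar Q\,\bar I^2$ then yields $I^3 = QI^2$.

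Once $(i)$ and $(ii)$ are in hand, condition $(3)$ of Theorem \ref{Sally} is satisfied, so $S \cong B(-1)$, which is $(iii)$, and the concluding assertions of that theorem give $\e_2 = 1$ (for $d \ge 2$) and $\e_i = 0$ for $3 \le i \le d$ (for $d \ge 3$), which is $(v)$. For $(iv)$, Theorem \ref{Sally} gives $\operatorname{depth} G \ge d-1$; since $I^2 \subseteq Q$ but $I^2 \ne QI$ we have $Q \cap I^2 = I^2 \supsetneq QI$, so $G$ violates the Valabrega--Valla criterion and is not Cohen--Macaulay, forcing $\operatorname{depth} G = d-1$. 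The genuine obstacle throughout is $(ii)$: the weaker hypothesis $\e_1 = \e_0 - \ell_A(A/I) + 1$ is not known to imply $I^3 = QI^2$ in higher dimension, and it is precisely the stronger input $\e_1 = 2$---through $\ell_A(I/Q) = 1$ and, inductively, the strict positivity of $\operatorname{depth}\mathrm{G}(\bar I)$ for $d \ge 3$---that makes Sally's lifting step available, with the base case $d = 2$ resting on Rossi's reduction-number bound already built into Theorem \ref{d=2}.
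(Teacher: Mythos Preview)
Your proposal is correct and follows essentially the same route as the paper: extract $\ell_A(I/Q)=\ell_A(I^2/QI)=1$ from Northcott's inequality, prove $I^3=QI^2$ by induction on $d$ using superficial elements and Sally's technique, then read off $(iii)$--$(v)$ from Theorem~\ref{Sally} and rule out Cohen--Macaulayness via $I^2\subseteq Q$. Your verification that $\bar I^2\ne\bar Q\,\bar I$ via Corollary~\ref{huneke} (using $\e_1(\bar I)=2\ne 1=\e_0(\bar I)-\ell_{\bar A}(\bar A/\bar I)$) is in fact a bit cleaner than the paper's contrapositive argument through Sally's technique, but the overall architecture is the same.
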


\begin{proof}
Since $I^2 \ne QI$, we get 
$$0 < \ell_A(I/Q) = \e_0 - \ell_A(A/I) < \e_1 =2$$
by Corollary \ref{huneke}. Therefore $\ell_A(I/Q) = 1$ and $\e_1 = \e_0  - \ell_A(A/I) + 1$. Let $I = Q + (x)$ with $x \in A$. Then $I^2 = QI + (x^2)$, so that $\ell_A(I^2/QI) = 1$ because $I^2 \ne QI$ and $\m I \subseteq Q$. We will show by induction on $d$ that $I^3 = QI^2$  and $\operatorname{depth} G \geq d-1$. Since $\ell_A(S_1) = \ell_A(I^2/QI) = 1$, thanks to Theorems \ref{Sally} and  \ref{d=2}, we may assume that $d \geq 3$ and our assertion holds true for $d - 1$. Choose $a_1 \in Q$ so that $a_1$ is a superficial element of $I$. Then, passing to the ideals $\overline{I} = I/(a_1)$ and $\overline{Q} = Q/(a_1)$ in the ring $\overline{A} = A/(a_1)$, we get $\e_1(\overline{I}) = \e_1 =2$. We claim that $\overline{I}^2 \ne \overline{Q}\,\overline{I}$. In fact, if $\overline{I}^2 = \overline{Q}\,\overline{I}$, then the ring $\mathrm{G}(\overline{I})$ is Cohen-Macaulay.  Hence Sally's technique \cite{S3}  works to get that $a_1t$ is regular on $G$, so that $I^2 =QI$, which is impossible. Consequently, the hypothesis of induction shows $\overline{I}^3 = \overline{Q}\,\overline{I}^2$ and $\operatorname{depth} \mathrm{G}(\overline{I}) \geq (d-1) -1 = d - 2 > 0$. Thus, thanks to Sally's technique again, we get $a_1t$ is regular on $G$, so that $I^3 = QI^2$ and $\operatorname{depth} G \geq d-1$. Since $\m I \subseteq Q$, we  get $I^2 \subseteq Q$, so that $G$ is not a Cohen-Macaulay ring; otherwise, $I^2 = Q \cap I^2 = QI$. Hence ${\depth}~G = d-1$. See Theorem \ref{Sally} for assertions (iii) and (v). 
\end{proof}

\begin{cor}
Suppose that $\e_1=2$. Then $\operatorname{depth} G \geq d-1$. The ring $G$ is Cohen-Macaulay if and only if $I^2 = QI$. 
\end{cor}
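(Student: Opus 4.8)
The plan is to dispose of the corollary by splitting on whether or not $I^2 = QI$, since the substantive analytic work has already been carried out in Theorem \ref{e2}. In the harder regime the theorem does all the lifting, and the remaining case is a short depth-chase.

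First suppose $I^2 \neq QI$. Then the hypotheses of Theorem \ref{e2} are met verbatim ($\e_1 = 2$ and $I^2 \neq QI$), and assertion (iv) of that theorem gives $\operatorname{depth} G = d-1$ outright. In particular $\operatorname{depth} G = d-1 < d = \dim G$, so $G$ is not Cohen-Macaulay in this case. Thus both the depth bound and the ``only if'' half of the Cohen-Macaulayness criterion are immediate here.

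Next suppose $I^2 = QI$. By Lemma \ref{lemma1} (3) this is equivalent to $S = (0)$, and I would show that $G$ is Cohen-Macaulay, which settles the remaining direction and a fortiori gives $\operatorname{depth} G = d \geq d-1$. To this end I would re-run the three exact sequences (a), (b), (c) appearing in the proof of Proposition \ref{proposition2} (4). Since $S = (0)$ we have $IR = IT$, which is a Cohen-Macaulay $T$-module of dimension $d+1$, so $\operatorname{depth}_T IR = d+1$. Feeding this into sequence (b) (recall $IR \cong L(1)$ with $L = R_+$, and that $A$ is Cohen-Macaulay of dimension $d$), the depth lemma yields $\operatorname{depth} R \geq \min(d+1, d) = d$; then sequence (c) gives $\operatorname{depth} G \geq \min(\operatorname{depth} R, \operatorname{depth} IR - 1) = d$, whence $G$ is Cohen-Macaulay because $\dim G = d$.

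Combining the two cases yields $\operatorname{depth} G \geq d-1$ unconditionally, together with the equivalence that $G$ is Cohen-Macaulay if and only if $I^2 = QI$. The only implication requiring a genuine (if routine) argument is $I^2 = QI \Rightarrow G$ Cohen-Macaulay; this is classical, of Valabrega--Valla type, and the depth-chase above is the cleanest way to extract it from the exact-sequence machinery already assembled in Section 2. I do not anticipate any real obstacle, since Theorem \ref{e2} carries all the weight in the nontrivial case $I^2 \neq QI$, and the complementary case reduces to a bookkeeping application of the depth lemma.
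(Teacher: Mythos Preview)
Your proof is correct and follows the approach the paper has in mind: the corollary is stated without proof immediately after Theorem \ref{e2}, so the intended argument is precisely the dichotomy you give---Theorem \ref{e2}(iv) handles $I^2\neq QI$, and the case $I^2=QI$ is the classical fact that $G$ is then Cohen--Macaulay. Your depth-chase through sequences (a), (b), (c) is a clean way to extract that classical fact from the machinery already in Section 2; one could equally well just cite Valabrega--Valla, as you note.
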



\section{Buchsbaumness in the graded rings $G$ associated to ideals with $\e_1=2$}

The purpose of this section is to study the problem of when the associated graded rings $G$ are  Buchsbaum for the ideals $I$ with $\e_1 = 2$.

We assume that $\e_1 =2$ but $I^2 \ne QI$. We have ${\depth}\,R=d$ (\cite[Theorem 2.1]{HM1}), because ${\depth}\,G=d-1$ by Theorem \ref{e2}. Let  $N = \fkm R+R_+$ and let
$$ {\rm{a}}_i(G)=\sup\{\mbox{$n \in \Bbb{Z}$ $|$ $[{\rmH}_N^i(G)]_n \neq (0)$}\}$$
for $0 \leq i \leq d$.

\begin{lem}\label{buch}
The following assertions hold true.
\begin{itemize}
\item[$(1)$] ${\rm{a}}_d(G)=2-d$ and $\ell_A([{\rmH}^d_N(G)]_{2-d})=1$.
\item[$(2)$] ${\rm{a}}_{d-1}(G)=1-d$ and $\ell_A([{\rmH}^{d-1}_N(G)]_{1-d})=1$.
\end{itemize}
In particular, ${\rmH}^{0}_N(G)=[{\rmH}^{0}_N(G)]_{0}$ and $G$ is a Buchsbaum ring, if $d = 1$. 
\end{lem}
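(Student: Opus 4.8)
The plan is to induct on $d$, with $d=1$ as the base case, cutting down by one superficial element at each step. Throughout I would use that, since $G$ is a finitely generated graded algebra over the Artinian ring $A/I$ and $R$ is module-finite over $T$, every graded local cohomology module $\H_N^i(G)$ has finite-length homogeneous components, is bounded above, and satisfies $\H_N^i(-)=\H_M^i(-)$; moreover $\operatorname{depth} G=d-1$ (Theorem \ref{e2}(iv)) and $\dim G=d$ force $\H_N^i(G)=0$ for $i\notin\{d-1,d\}$, together with $i=0$ precisely when $d=1$. Thus the content is to locate the top nonvanishing degree and its length for $\H_N^{d}(G)$ and $\H_N^{d-1}(G)$, and the ``in particular'' clause is just the case $d-1=0$.

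For the base case $d=1$ I would exploit $S\cong B(-1)$ (Theorem \ref{e2}(iii)) twice. First, feeding $0\to IT\to IR\to S\to 0$ and $0\to IR\to R\to G\to 0$ into local cohomology and using that $IT$ is Cohen--Macaulay of dimension $2$ while $\operatorname{depth} R=1$, I obtain an embedding $\H_M^0(G)\hookrightarrow \H_M^1(S)=\H_M^1(B)(-1)$; as $B=k[X]$ makes $\H_M^1(S)$ concentrated in nonpositive degrees, this forces $\H_N^0(G)=[\H_N^0(G)]_0\hookrightarrow k$, and $\operatorname{depth} G=0$ makes it nonzero, so it is one-dimensional and annihilated by $N$ (whence $G$ is Buchsbaum). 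Second, Proposition \ref{proposition2}(2) together with $\ell_A(S_n)=1$ $(n\ge1)$ pins the Hilbert function of $G$, and the Grothendieck--Serre identity $\ell_A(G_n)-\mathrm P_G(n)=\sum_i(-1)^i\ell_A([\H_N^i(G)]_n)$ then reads off $\ell_A([\H_N^1(G)]_1)=1$ and $[\H_N^1(G)]_n=0$ for $n\ge2$, i.e. ${\rm a}_1(G)=1$ with one-dimensional top.

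For the inductive step $(d\ge2)$ I would use $\operatorname{depth} G=d-1\ge1$ to pick a superficial $a_1$ with $a_1t$ a $G$-nonzerodivisor; then $G/a_1tG\cong\mathrm G(\overline I)=:G'$, with $\overline A=A/(a_1)$ of dimension $d-1$ (Sally's technique, as in the proof of Theorem \ref{e2}), and $\overline I$ again satisfies $\e_1(\overline I)=2$, $\overline I^2\ne\overline Q\,\overline I$ (else $G'$, hence $G$, would be Cohen--Macaulay). The induction hypothesis gives $\H_N^{d-1}(G')$ with one-dimensional top in degree $3-d$ and $\H_N^{d-2}(G')$ with one-dimensional top in degree $2-d$. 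Running the long exact sequence of $0\to G(-1)\xrightarrow{a_1t}G\to G'\to 0$ and writing $B_1=\H_N^{d-1}(G)$, $B_2=\H_N^{d}(G)$: multiplication by $a_1t$ is surjective on the top module $B_2$, its kernel on $B_2$ is a quotient of $\H_N^{d-1}(G')$, and its kernel on $B_1$ is $\H_N^{d-2}(G')$. Combining ``injective in high degree'' (from the kernel) with ``iso in high degree, hence zero since bounded above'' (from kernel and cokernel) yields ${\rm a}_d(G)\le2-d$ and ${\rm a}_{d-1}(G)\le1-d$; comparing the critical graded pieces then transfers the one-dimensional tops of $G'$ to $[\H_N^{d}(G)]_{2-d}$ and $[\H_N^{d-1}(G)]_{1-d}$, giving the asserted equalities and lengths.

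The step I expect to demand the most care is this last degree bookkeeping. In particular one must check that the connecting map $\H_N^{d-1}(G')\to\H_N^{d}(G)(-1)$ is injective on the one-dimensional top piece, which I would deduce from the fact that $\H_N^{d-1}(G)$ already vanishes above degree $1-d$, so the preceding map cannot absorb the top of $\H_N^{d-1}(G')$. Everything else is the familiar ``a degree-one nonzerodivisor shifts the $a$-invariant by one'' mechanism, but here $G$ is not Cohen--Macaulay, so $\H_N^{d-1}$ and $\H_N^{d}$ must be tracked simultaneously rather than assumed to decouple.
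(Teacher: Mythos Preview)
Your proposal is correct. The inductive step ($d\ge 2$) is essentially identical to the paper's: both use the long exact sequence arising from $0\to G(-1)\xrightarrow{a_1t} G\to \mathrm{G}(\overline I)\to 0$ and perform exactly the degree bookkeeping you describe to transport the $a$-invariants and the one-dimensional tops from $\mathrm{G}(\overline I)$ to $G$.

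The genuine difference lies in the base case $d=1$. The paper computes $(0):_Gf$ directly from $I^{n+2}=aI^{n+1}$, shows it is concentrated in degree $0$ and isomorphic to $\tilde I/I$, and then runs the Northcott-type inequality $\e_1=\e_0-\ell_A(A/I)+1\le \e_0(\tilde I)-\ell_A(A/\tilde I)\le \e_1(\tilde I)=\e_1$ to force $\ell_A(\tilde I/I)=1$; for $\H_N^1(G)$ it uses the exact sequence $0\to \H_N^0(G)\to G/fG\to \H_N^1(G)(-1)\to \H_N^1(G)\to 0$ and reads off the top as $[G/fG]_2\cong I^2/QI$. Your route instead exploits $S\cong B(-1)$ twice: embedding $\H_N^0(G)\hookrightarrow \H_M^1(IR)\hookrightarrow \H_M^1(S)=\H_M^1(B)(-1)$ immediately gives concentration in degree $0$ with length $\le 1$, bypassing the Ratliff--Rush computation entirely; and the Grothendieck--Serre formula, fed by the Hilbert function from Proposition~\ref{proposition2}(2), pins $\H_N^1(G)$. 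Your approach is more structural and avoids the $\tilde I$ detour, while the paper's argument is more elementary (no Grothendieck--Serre) and has the side benefit of explicitly identifying $[\H_N^0(G)]_0$ with $\tilde I/I$.
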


\begin{proof}
Suppose $d=1$. Let  $a = a_1$ and $f= at$. Then $I^3=aI^2$ by Theorem \ref{e2}.
Let $n \geq 1$ be an integer and $x \in I^n$. Then  since $I^{n+2}=aI^{n+1}$, we get $x \in I^{n+1}$ if $ax \in I^{n+2}$. Thus $(0):_G f = [(0) :_G f]_0$. Hence $(0):_Gf^n = (0):_G f$ for all $n \geq 1$, so that $${\H}_N^0(G) =(0):_G f = [(0):_Gf]_0 \cong \tilde{I}/I.$$ In particular $\ell_A(\tilde{I}/I) >0$. Because
\begin{eqnarray*}
\e_1&=&\e_0-\ell_A(A/I)+1 \\
    &=&\e_0(\widetilde{I})-\ell_A(A/\widetilde{I})-(\ell_A(\widetilde{I}/I)-1)\\
    &\leq& \e_0(\widetilde{I})-\ell_A(A/\widetilde{I})\\
    &\leq& \e_1(\widetilde{I}) \\
    &=& \e_1,
\end{eqnarray*}
we get $\ell_A(\widetilde{I}/I)=1$, which proves assertion (2). In particular, ${\rmH}^{0}_N(G)=[{\rmH}^{0}_N(G)]_{0}$ and $G$ is a Buchsbaum ring. Because $(0):_G f={\H}_N^0(G)$, we have the following exact sequence
$$ 0 \to {\rmH}_N^0(G) \to G/fG \to {\rmH}_N^1(G)(-1) \overset{f}{\to} {\rmH}_N^1(G) \to 0 $$
of local cohomology modules. Hence ${\rma}_1(G) = 1$, because ${\H}_N^0(G) = [{\H}_N^0(G)]_0$ and $G/f G =A/I \oplus I/Q \oplus I^2/QI$ with $I^2/QI \ne (0)$. We have $[G/fG]_2 \cong [{\H}_N^1(G)]_1$, whence $\ell_A([{\H}_N^1(G)]_1) = \ell_A(I^2/QI) = 1$ by Theorem \ref{e2}.

Now we consider the case where $d \geq 2$.
Because $\operatorname{depth} G=d-1>0$ by Theorem \ref{e2},
we may assume that $f=a_1t$ is regular on $G$.
We put $\overline{A}=A/(a_1)$, $\overline{I}=I/(a_1)$, and $\overline{Q}=Q/(a_1)$.
Then $\e_1(\overline{I})=2$ and $\overline{I}^2 \neq \overline{Q}\,\overline{I}$
(cf. Proof of Theorem \ref{e2}).
Hence,  thanks to the hypothesis of induction, we have assertions (1) and (2) for the ideal $\overline{I}$. We now look at the exact sequence
$$ 0 \to {\rmH}_N^{d-2}(\Gr(\overline{I})) \to {\rmH}_N^{d-1}(G)(-1) \overset{f}{\to} 
 {\rmH}_N^{d-1}(G) \to {\rmH}_N^{d-1}(\Gr(\overline{I})) \leqno{(*)}$$
$$\to {\rmH}_N^d(G)(-1) \overset{f}{\to} {\rmH}_N^d(G) \to 0 $$
of local cohomology modules which is induced from the canonical exact sequence 
$$0 \to G(-1) \overset{f}{\to} G \to \Gr(\overline{I}) \to 0$$
of graded $G$-modules.
Then, since ${\rma}_{d-2}(\Gr(\overline{I}))=2-d$, we get a monomorphism $[{\rmH}_N^{d-1}(G)]_n \hookrightarrow [{\rmH}_N^{d-1}(G)]_{n+1}$
for all $n \geq 2-d$, whence $[{\rmH}_N^{d-1}(G)]_n=(0)$ for all $n \geq 2-d$.
Thus  ${\rma}_{d-1}(G) \leq 1-d$ and
$$[{\rmH}_N^{d-2}(\Gr(\overline{I}))]_{2-d} \cong [{\rmH}_N^{d-1}(G)]_{1-d}.$$
Therefore ${\rma}_{d-1}(G) =1-d$ and $\ell_A([{\rmH}_N^{d-1}(G)]_{1-d})=\ell_A([{\rmH}_N^{d-2}(\Gr(\overline{I}))]_{2-d})=1$.
On the other hand, letting ${\rma}={\rma}_d(G)$, in exact sequence $(*)$ above we see that
$[{\rmH}_N^d(G)(-1)]_{{\rma} + 1}=[{\rmH}_N^d(G)]_{\rma}~(\neq (0))$
is a homomorphic image of $[{\rmH}_N^{d-1}(\Gr(\overline{I}))]_{\rma +1}$.
Hence ${\rma} +1 \leq {\rma}_{d-1}(\Gr(\overline{I}))= 3-d$, whence ${\rma} \leq 2-d$.
Because $[{\rmH}_N^{d-1}(G)]_{3-d}=(0)$ and $[{\rmH}_N^d(G)]_{3-d}=(0)$, by exact sequence $(*)$ 
we have $[{\rmH}_N^{d-1}(\Gr(\overline{I}))]_{3-d} \cong [{\rmH}_N^d(G)]_{2-d}$.
Consequently, ${\rma}_d(G)=2-d$ and $\ell_A([{\rmH}_N^d(G)]_{2-d})=1$, as is claimed.
\end{proof}

We are in a position to state the main result of this section. See Theorem \ref{ex2} for an example whose associated graded ring $G$ is  a Buchsbaum ring.

\begin{thm}\label{MainTheoremD}
The following two conditions are equivalent to each other.
\begin{itemize}
\item[$(1)$] $G$ is a Buchsbaum ring.
\item[$(2)$] ${\rmH}^{d-1}_N(G)=[{\rmH}^{d-1}_N(G)]_{1-d}$.

\vspace{2mm}

\noindent
\hspace{-13.5mm}\textup{When $d\geq2$, one can add the following.}
\item[$(3)$] $R$ is a Buchsbaum ring.
\end{itemize}
\end{thm}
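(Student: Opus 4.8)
The plan is to establish the cycle of implications $(1) \Rightarrow (2) \Rightarrow (3) \Rightarrow (1)$, exploiting the explicit local cohomology computations furnished by Lemma \ref{buch}. The key structural input is that $G$ has depth exactly $d-1$ (Theorem \ref{e2}), so the local cohomology modules ${\rmH}_N^i(G)$ vanish for $i < d-1$ except for ${\rmH}_N^0(G)$ (the finite-length piece measuring $\tilde I/I$), and Lemma \ref{buch} pins down the top two modules ${\rmH}_N^{d-1}(G)$ and ${\rmH}_N^d(G)$ in their extremal degrees. Recall that for a graded ring, the Buchsbaum property is governed by the Buchsbaum invariant $\Bbb{I}(G) = \sum_{i=0}^{d-1}\binom{d-1}{i}\ell_A({\rmH}_N^i(G))$ together with the requirement that each ${\rmH}_N^i(G)$ for $i<d$ be annihilated by $N$; for surjectivity of the standard homomorphism, what one must verify is that $M\cdot{\rmH}_N^i(G) = (0)$ for all $i \neq d$.

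First I would treat $(1) \Leftrightarrow (2)$. The implication $(1) \Rightarrow (2)$ is immediate: if $G$ is Buchsbaum then $N\cdot {\rmH}_N^{d-1}(G) = (0)$, and combined with ${\rma}_{d-1}(G) = 1-d$ and $\ell_A([{\rmH}_N^{d-1}(G)]_{1-d}) = 1$ from Lemma \ref{buch}~(2), this forces ${\rmH}_N^{d-1}(G)$ to be concentrated in the single degree $1-d$. For $(2) \Rightarrow (1)$, the essential point is that condition (2) already controls the most delicate cohomology module. Using the exact sequence $(*)$ from the proof of Lemma \ref{buch} induced by a $G$-regular element $f = a_1 t$, I would argue inductively that $N$ annihilates each ${\rmH}_N^i(G)$: the regular element $f$ relates the cohomology of $G$ to that of $\Gr(\overline I)$, and since $\overline I$ satisfies the same hypotheses in dimension $d-1$ (as recorded in the proof of Theorem \ref{e2}), the inductive hypothesis gives Buchsbaumness of $\Gr(\overline I)$. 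The surjection $f$-structure in $(*)$, together with the concentration of ${\rmH}_N^{d-1}(G)$ in degree $1-d$, should propagate the $N$-annihilation upward and confirm that $G$ is Buchsbaum.

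For the equivalence with $(3)$ when $d \geq 2$, I would pass between $G$ and $R$ through the standard exact sequences relating them. Recall $R$ has depth $d$ (from ${\depth}\,G = d-1$ via \cite[Theorem 2.1]{HM1}), and the exact sequence $0 \to IR \to R \to G \to 0$ together with $IR \cong L(1)$ and $0 \to L \to R \to A \to 0$ lets one transfer local cohomology between $R$ and $G$. The Buchsbaum property of a graded ring is stable under such comparisons precisely because $A$ is Cohen-Macaulay, so the finite-length cohomology of $R$ in degrees $< \dim R = d+1$ is carried by that of $G$. I expect the translation to show that $N\cdot {\rmH}_N^i(R) = (0)$ for $i \neq d+1$ is equivalent to the corresponding statement for $G$, giving $(1) \Leftrightarrow (3)$.

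The main obstacle I anticipate is the $(2) \Rightarrow (1)$ direction, specifically verifying that the single controlled module ${\rmH}_N^{d-1}(G)$ genuinely suffices to force Buchsbaumness of all lower cohomology. The subtlety is that ${\rmH}_N^0(G)$, which is isomorphic to $\tilde I/I$ and concentrated in degree $0$ by Lemma \ref{buch}, is automatically $N$-annihilated, but one must rule out any pathology in the intermediate modules and ensure the inductive step via the long exact sequence $(*)$ does not introduce extension problems that break the $N$-annihilation. The cleanest route is likely to show directly that condition (2) forces the Buchsbaum invariant $\Bbb{I}(G)$ to equal $\ell_A({\rmH}_N^0(G)) + (d-1)\ell_A([{\rmH}_N^{d-1}(G)]_{1-d})$ and that this numerical equality, combined with depth $d-1$, characterizes Buchsbaumness in this setting.
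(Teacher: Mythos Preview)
Your argument has the two implications $(1)\Rightarrow(2)$ and $(2)\Rightarrow(1)$ reversed in difficulty, and the direction you call ``immediate'' contains a genuine gap.

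For $(1)\Rightarrow(2)$ you claim that $N\cdot{\rmH}_N^{d-1}(G)=(0)$ together with ${\rma}_{d-1}(G)=1-d$ and $\ell_A([{\rmH}_N^{d-1}(G)]_{1-d})=1$ forces ${\rmH}_N^{d-1}(G)$ to be concentrated in the single degree $1-d$. This is false: $N$-annihilation only says the module is a $k$-vector space, and nothing prevents further nonzero graded pieces in degrees below $1-d$. A module such as $k(d-1)\oplus k(d)$ satisfies all three conditions you list without being concentrated in one degree. The paper handles $(1)\Rightarrow(2)$ by the inductive argument you assigned to the other direction: since $G$ is Buchsbaum, so is $G/fG\cong\Gr(\overline{I})$; induction gives ${\rmH}_N^{d-2}(\Gr(\overline{I}))=[{\rmH}_N^{d-2}(\Gr(\overline{I}))]_{2-d}$; and then, because $f\in N$ acts as zero on ${\rmH}_N^{d-1}(G)$, the exact sequence $(*)$ yields ${\rmH}_N^{d-2}(\Gr(\overline{I}))\cong{\rmH}_N^{d-1}(G)(-1)$, giving the concentration.

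Conversely, $(2)\Rightarrow(1)$ needs no induction at all. You are overlooking that $\operatorname{depth}G=d-1$ already forces ${\rmH}_N^i(G)=(0)$ for all $i<d-1$; in particular ${\rmH}_N^0(G)=(0)$ when $d\geq 2$, so your worry about $\tilde{I}/I$ and ``intermediate modules'' is misplaced (the statement ${\rmH}_N^0(G)=[{\rmH}_N^0(G)]_0$ in Lemma \ref{buch} refers only to the case $d=1$). The single module ${\rmH}_N^{d-1}(G)$ is the only obstruction, and if it lives in one degree with length $1$ then $N$ annihilates it trivially.

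For the equivalence with $(3)$ your outline is in the right spirit, but the paper works with the pair of sequences $0\to R_+\to R\to A\to 0$ and $0\to R_+(1)\to R\to G\to 0$ and proves concretely that ${\rmH}_N^d(R)=[{\rmH}_N^d(R)]_{2-d}$ has length $1$; the assertion that Buchsbaumness is ``stable under such comparisons'' is not enough by itself and requires this explicit degree-by-degree bookkeeping.
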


\begin{proof}
$(2) \Rightarrow (1)$
By Lemma \ref{buch} we have $N {\cdot} {\rmH}_N^{d-1}(G)=0$, since $\m{\cdot}[{\rmH}_N^{d-1}(G)]_{1-d}=(0)$. 
Hence $G$ is a Buchsbaum ring, because ${\depth}\,G=d-1$ by Theorem \ref{e2}.

$(1) \Rightarrow (2)$
By Lemma \ref{buch} we may assume that $d \geq 2$ and our assertion holds true for $d-1$.
Since ${\depth}\,G=d-1>0$, we may assume that $f=a_1t$ is regular on $G$. Similarly as before, let $\overline{A}=A/(a_1)$, $\overline{I}=I/(a_1)$, and $\overline{Q}=Q/(a_1)$. Then $\Gr(\overline{I}) = G/fG$ is a Buchsbaum ring with ${\depth}\,\mathrm{G}(\overline{I})=d-2$. Hence, thanks to the hypothesis of induction, we get 
${\rmH}_N^{d-2}(\Gr(\overline{I}))=[{\rmH}_N^{d-2}(\Gr(\overline{I}))]_{2-d}$.
Thus ${\rmH}_N^{d-1}(G)=[{\rmH}_N^{d-1}(G)]_{1-d}$, because ${\rmH}_N^{d-2}(\Gr(\overline{I})) \cong {\rmH}_N^{d-1}(G)(-1)$ (see the exact sequence $(*)$ in Proof of Lemma \ref{buch}).

Suppose that $d \geq 2$.

$(3) \Rightarrow (1)$
Apply functors ${\H}_N^i(*)$ to 
the exact sequences
$$ 0 \to R_+ \to R \to A \to 0 \hspace{3.0mm}\mbox{ and }\hspace{3.0mm} 0 \to R_+(1) \to R \to G \to 0.$$ Then, since $\operatorname{depth} R = d$ (cf. \cite[Theorem 2.1]{HM1}),
we get the exact sequences
$$\vspace{-3.0mm}0 \rightarrow {\rmH}^d_N(R_+) \rightarrow {\rmH}^d_N(R) \rightarrow {\rmH}^d_{\fkm}(A) \mbox{  and } $$
\vspace{-3.0mm}
$(**)$
$$0 \rightarrow {\rmH}^{d-1}_N(G) \rightarrow {\rmH}^{d}_N(R_+)(1) \rightarrow {\rmH}^d_N(R) \rightarrow {\rmH}^d_N(G).$$
Because   $R$ is a Buchsbaum ring, $N {\cdot} {\rmH}_N^d(R)=(0)$
and so $N {\cdot} {\rmH}_N^{d}(R_+)=(0)$.
Thus $N {\cdot} {\rmH}_N^{d-1}(G)=(0)$, whence $G$ is a Buchsbaum ring.

$(2) \Rightarrow (3)$
Look at exact sequences $(**)$.
Then 
$$[{\rmH}_N^d(R_+)]_{n+1} \to \hspace{-5.0mm}\to [{\rmH}_N^d(R)]_n$$
for all $n > {\rma}_d(G)=2-d$.
Hence 
$$[{\rmH}_N^d(R)]_n\cong [{\rmH}_N^d(R_+)]_n=(0)$$
for all $n > 2-d$.
We have
$$[{\rmH}_N^d(R_+)]_n \cong [{\rmH}_N^d(R)]_n$$ for all $n < 0$ and
$$[{\rmH}_N^d(R_+)]_n = [{\rmH}_N^d(R_+)(1)]_{n-1} \hookrightarrow [{\rmH}_N^d(R)]_{n-1}$$
for all $n < 2-d$, since ${\rmH}_N^{d-1}(G)=[{\rmH}_N^{d-1}(G)]_{1-d}$. Therefore, since $d \geq 2$, $[{\rmH}_N^d(R)]_n$ is embedded into $[{\rmH}_N^d(R)]_{n-1}$ for all $n<2-d$.
Hence $[{\rmH}_N^d(R)]_n=(0)$ for all $n <2-d$, because ${\rmH}_N^d(R)$ is a finitely graded $R$-module (cf. \cite{BS}; recall that $G$ is a Buchsbaum ring).
Thus $${\rmH}_N^d(R)=[{\rmH}_N^d(R)]_{2-d}.$$

Because $[{\rmH}_N^d(R_+)]_{3-d}=(0)$, by exact sequence $(**)$ we have
$$ [{\rmH}_N^d(R)]_{2-d}  \hookrightarrow [{\rmH}_N^d(G)]_{2-d} ,$$
so that $\ell_A({\rmH}_N^d(R)])=1$, 
since $\ell_A([{\rmH}_N^d(G)]_{2-d})=1$ by Lemma \ref{buch} and ${\depth}~R = d$ by \cite[Theorem 2.1]{HM1}.
Thus $N{\cdot}{\H}_N^d(R) = (0)$, whence $R$ is a Buchsbaum ring.
\end{proof}


\section{An example}
In this section we explore the following example which satisfies the conditions Theorem \ref{Sally} (1) and Theorem \ref{MainTheoremD} (1). The eaxmple is a generalization of an example given by the first author \cite {G}, where the case $\Lambda = \emptyset$ is explored.

Let $m\geq d > 0$ be integers.
Let $\Lambda$ be a subset of $\{1,2,\cdots,m \}$
such that $\Lambda \cap \{1,2,\cdots,d\}=\emptyset$.
Let $$U=k[[X_1,X_2,\cdots,X_m,V,Y_1,Y_2,\cdots,Y_d]]$$
be the formal power series ring over a field $k$ and
let $$\a=(X_1,X_2,\cdots,X_m){\cdot}(X_1,X_2,\cdots,X_m,V)+(V^2-\sum_{i=1}^{d}X_{i}Y_{i}).$$
We put $A=U/\a$ and denote the images of $X_i$, $V$, and $Y_j$ in $A$ by $x_i$, $v$ and $a_j$, respectively. Then ${\dim}~A=d$, since $\sqrt{\a}= (X_1,X_2,\cdots,X_m,V)$.
Let $\m = (x_j \mid 1 \leq j \leq m) + (v) + (a_i \mid 1 \leq i \leq d)$ be the maximal ideal in $A$. We put 
$$I=(a_1, a_2, \cdots,a_d)+(x_{\alpha} \mid \alpha \in \Lambda)+(v)~ \textup{and}~Q=(a_1,a_2, \cdots,a_d).$$ Then ${\m}^2=Q{\m}$, $I^2 = QI + (v^2) \ne QI$,  and $I^3=QI^2$ (cf. Lemma \ref{lemmaex} below), whence
$Q$ is a minimal reduction of both ${\m}$ and $I$, and 
$a_1,a_2,\cdots,a_d$ is a system of parameters for $A$.

We are now interested in the Hilbert coefficients $\e_i's$ of the ideal $I$ as well as the structure of the associated graded ring and the Sally module of $I$. We maintain the same notation as in the previous sections.

We then have he following.

\begin{thm}\label{ex2}
The following assertions hold true.
\begin{itemize}
\item[$(1)$] $A$ is a Cohen-Macaulay local ring with $\dim A = d$.
\item[$(2)$] $S \cong B(-1)$ as graded $T$-modules.
\item[$(3)$] ${\e}_0=m+2$ and ${\e}_1=\sharp \Lambda +2$. Hence, $\e_1 = 2$ but $I^2 \ne QI$, if $\Lambda=\emptyset$. 
\item[$(4)$] ${\e}_2=1$, if $d \geq 2$ and ${\e_i}=0$ for all $3 \leq i \leq d$, if $d \geq 3$.
\item[$(5)$] $G$ is a Buchsbaum ring with  ${\operatorname{depth}}~G=d-1$ and $\ell_A({\H}^{d-1}_N(G))=1$.
\end{itemize}
\end{thm}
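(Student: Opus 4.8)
The plan is to prove the five assertions of Theorem~\ref{ex2} in the order listed, treating assertion (1) as the foundational computation and bootstrapping the rest from the structural results already established for Sally modules. First I would establish assertion (1) by verifying that $a_1, a_2, \dots, a_d$ is a regular sequence on $A$. Since I already have $\sqrt{\a} = (X_1, \dots, X_m, V)$, the ring $A$ has dimension $d$ and $a_1, \dots, a_d$ is a system of parameters; the task is to show $A$ is Cohen-Macaulay, equivalently that this system of parameters is a regular sequence. The natural approach is to exhibit $A$ explicitly as a finite module over the regular subring $k[[a_1, \dots, a_d]]$ and check freeness, or to compute $\ell_A(A/Q)$ and compare with the multiplicity; the relation $V^2 = \sum_{i=1}^d X_i Y_i$ together with $\m^2 = Q\m$ should make the length count tractable. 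Once $\ell_A(A/Q) = \rme_0$ matches the multiplicity, Cohen-Macaulayness follows.

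The heart of the argument is the auxiliary lemma (Lemma~\ref{lemmaex}) asserting $\m^2 = Q\m$, $I^2 = QI + (v^2) \ne QI$, and $I^3 = QI^2$, which I would prove by direct monomial manipulation in $A$ using the defining relations $x_i x_j = x_i v = 0$ in $A$ (these come from the first summand of $\a$) and $v^2 = \sum X_i Y_i = \sum x_i a_i$. The key computations are that $v^2 \in \m^2$ lies in $Q\m$, that $v^2 \notin QI$ (this is what forces $I^2 \ne QI$ and should follow from a careful length or grading argument), and that multiplying $I^2$ by $I$ lands back in $QI^2$ because $v \cdot v^2 = v \sum x_i a_i$ with $v x_i = 0$. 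Granting the lemma, assertion (2) is immediate: from $I^2 = QI + (v^2)$ with $v^2$ a single generator we get $\ell_A(I^2/QI) = 1$, and combined with $I^3 = QI^2$ this is exactly condition (3) of Theorem~\ref{Sally}, so $S \cong B(-1)$. This also gives $\rme_1 = \rme_0 - \ell_A(A/I) + 1$ via Proposition~\ref{proposition2}~(3).

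For assertions (3) and (4) I would compute the two length invariants directly. The multiplicity $\rme_0 = \ell_A(A/Q)$ is obtained from the structure of $A$ over $k[[a_1,\dots,a_d]]$, and I expect $\rme_0 = m+2$ from counting the $k$-basis of $A/Q$ contributed by $1$, the $x_j$ for $1 \le j \le m$, and $v$. The value $\ell_A(A/I)$ is computed by adjoining the generators $x_\alpha$ for $\alpha \in \Lambda$ and $v$ to $Q$, which removes $\sharp\Lambda + 1$ basis elements, giving $\ell_A(A/I) = \rme_0 - (\sharp\Lambda + 1)$; substituting into $\rme_1 = \rme_0 - \ell_A(A/I) + 1$ yields $\rme_1 = \sharp\Lambda + 2$. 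The higher coefficients $\rme_2 = 1$ and $\rme_i = 0$ for $3 \le i \le d$ then follow directly from Theorem~\ref{Sally}~(i),(ii) since condition (1) of that theorem holds.

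Finally, assertion (5) is where I expect the main obstacle, though much of the work is already abstracted away. Since $\rme_1 = 2$ arises only in the special case $\Lambda = \emptyset$, I cannot directly invoke Theorem~\ref{e2} for general $\Lambda$; instead I would use Theorem~\ref{Sally}~(iii), which gives $\operatorname{depth} G \ge d-1$, together with the observation that $G$ cannot be Cohen-Macaulay (because $\m I \subseteq Q$ forces $I^2 \subseteq Q$, so $I^2 = Q \cap I^2 \ne QI$), pinning down $\operatorname{depth} G = d-1$. The genuinely delicate part is proving Buchsbaumness and $\ell_A(\H^{d-1}_N(G)) = 1$: the cleanest route is to verify condition (2) of Theorem~\ref{MainTheoremD}, namely $\H^{d-1}_N(G) = [\H^{d-1}_N(G)]_{1-d}$, by an induction on $d$ that cuts down by a superficial element $a_1 t$ (regular on $G$ since $\operatorname{depth} G = d-1 > 0$ when $d \ge 2$) and passes to $\mathrm{G}(\overline{I})$, exactly paralleling the exact sequence $(*)$ machinery of Lemma~\ref{buch} and the proof of Theorem~\ref{MainTheoremD}. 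The base case and the concentration of $\H^{d-1}_N(G)$ in the single degree $1-d$ with length one should emerge from Lemma~\ref{buch}~(2) once I confirm the hypotheses $\rme_1 = 2$, $I^2 \ne QI$ hold after the reduction, which requires checking that the special geometry of this example is preserved modulo $a_1$.
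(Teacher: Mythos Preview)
Your plan for assertions (1)--(4) matches the paper's approach closely: the paper also establishes Cohen--Macaulayness by showing $\ell_A(A/Q)=\e_0$, then uses the auxiliary lemma (your ``Lemma~\ref{lemmaex}'') and Theorem~\ref{Sally} to obtain $S\cong B(-1)$ and the Hilbert coefficients. One point you underestimate in (1) is the computation of $\e_0$ itself: knowing $\ell_A(A/Q)=m+2$ is easy, but you need an \emph{independent} calculation of the multiplicity. The paper does this via a localization (Proposition~\ref{lemmaF}): invert $Y_1$, change variables to eliminate $X_1$ using the relation $V^2-\sum X_iY_i$, and count the length at the unique minimal prime; the associative formula then gives $\e_0=m+2$.

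There is a genuine gap in your plan for assertion (5). You correctly note that Theorem~\ref{e2}, Lemma~\ref{buch}, and Theorem~\ref{MainTheoremD} all assume $\e_1=2$, which fails when $\Lambda\neq\emptyset$; yet your final paragraph proposes to invoke Lemma~\ref{buch}~(2) ``once I confirm the hypotheses $\e_1=2$, $I^2\ne QI$ hold after the reduction.'' They do not: passing to $A/(a_1)$ preserves $\e_1=\sharp\Lambda+2$, so Lemma~\ref{buch} as stated is unavailable at every stage of your induction. The local cohomology exact sequence $(*)$ machinery can in fact be adapted---the only place the proof of Lemma~\ref{buch} uses $\e_1=2$ is in the base case $d=1$, and there the chain of inequalities forcing $\ell_A(\tilde I/I)=1$ needs only $\e_1=\e_0-\ell_A(A/I)+1$, which you have---but you would have to carry this generalization out explicitly rather than cite the lemma.

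The paper takes a different route for Buchsbaumness. It proves three intersection conditions (Lemma~\ref{lemmaex} (2), (3), (4)): that $(a_1,\ldots,\check a_i,\ldots,a_d)\cap I^2=(a_1,\ldots,\check a_i,\ldots,a_d)I$, that $(a_\alpha\mid\alpha\in\Gamma)\cap I^n=(a_\alpha\mid\alpha\in\Gamma)I^{n-1}$ for proper $\Gamma$, and that $(a_1^2,\ldots,a_d^2)\cap I^n=(a_1^2,\ldots,a_d^2)I^{n-2}$ for $3\le n\le d+1$. These are substantial computations (occupying most of Section~5) that your proposal does not anticipate; together with \cite[Proposition~9.1]{G} they yield Buchsbaumness directly. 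Only \emph{after} Buchsbaumness is established does the paper remark that $\H_N^{d-1}(G)=[\H_N^{d-1}(G)]_{1-d}$ with length one follows by the induction you describe. So the paper's logic runs in the opposite order from yours, and leans on an external Buchsbaumness criterion you have not mentioned.
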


We divide the proof of Theorem \ref{ex2} into several steps. Let us  begin with the following. 

\begin{prop}\label{lemmaF}
Let $\fkp = \sqrt{(X_1,X_2,\cdots, X_m, V)}$ in $U$. Then ${\ell}_{U_{\p}}(A_{\p})=m+2$.
\end{prop}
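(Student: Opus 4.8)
The plan is to localize, complete, and reduce the statement to a dimension count over a field. First I would note that $(X_1,\dots,X_m,V)$ is already prime in $U$ (its quotient is the domain $k[[Y_1,\dots,Y_d]]$), so $\fkp=(X_1,\dots,X_m,V)$ and $A_\fkp=U_\fkp/\a U_\fkp$. Put $D=k[[Y_1,\dots,Y_d]]$ and let $\kappa$ be its field of fractions. Since $D\cap\fkp=(0)$, every nonzero element of $D$ is a unit in $R:=U_\fkp$, so $\kappa$ embeds in $R$ as a coefficient field and the $Y_i$ become nonzero scalars $u_i\in\kappa$. As $\a R$ is $\fkn$-primary (where $\fkn=(X_1,\dots,X_m,V)R$), the ring $A_\fkp=R/\a R$ has finite length and completing does not change it: $\widehat R\cong\kappa[[X_1,\dots,X_m,V]]$ and $\ell_{U_\fkp}(A_\fkp)=\dim_\kappa\bigl(\widehat R/\a\widehat R\bigr)$. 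It therefore suffices to show this last dimension is $m+2$.

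For the upper bound I would reduce monomials modulo the three families of generators of $\a\widehat R$: the products $X_iX_j$, the products $X_iV$, and $g:=V^2-\sum_{i=1}^d u_iX_i$. The first two annihilate every monomial divisible by two of the $X$'s or by some $X_iV$, so the only monomials not immediately killed are $1$, the $X_i$, and the powers $V^k$; and $g$ rewrites $V^2\equiv\sum_{i=1}^d u_iX_i$, whence $V^3\equiv\sum_{i=1}^d u_i(X_iV)\equiv0$ and $V^k\equiv0$ for $k\ge3$. In particular $\fkn^3\subseteq\a\widehat R$, so $\widehat R/\a\widehat R$ is spanned over $\kappa$ by the images of $1,X_1,\dots,X_m,V$, giving $\dim_\kappa\widehat R/\a\widehat R\le m+2$.

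The matching lower bound is the only real obstacle, because $g$ is inhomogeneous and the relation $V^2=\sum u_iX_i$ could a priori collapse the spanning set; passing to $\widehat R$ (where the $u_i$ are honest scalars) is exactly what lets me avoid a delicate degree argument and instead build an explicit model. Let $W=\kappa\cdot1\oplus\bigl(\bigoplus_{i=1}^m\kappa e_i\bigr)\oplus\kappa\varepsilon$ be the commutative $\kappa$-algebra determined by $e_ie_j=0$, $e_i\varepsilon=0$, and $\varepsilon^2=\sum_{i=1}^d u_ie_i$; a short check of associativity on basis triples shows $W$ is a local Artinian $\kappa$-algebra with $\dim_\kappa W=m+2$. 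Sending $X_i\mapsto e_i$ and $V\mapsto\varepsilon$ (and fixing $\kappa$) defines a continuous $\kappa$-algebra homomorphism $\widehat R\to W$, since $e_i,\varepsilon$ are nilpotent; by construction it kills $X_iX_j$, $X_iV$ and $g$, hence factors through a surjection $\widehat R/\a\widehat R\twoheadrightarrow W$. Comparing with the upper bound forces $\dim_\kappa\widehat R/\a\widehat R=m+2$ (and, as a byproduct, that this surjection is an isomorphism and that $1,X_1,\dots,X_m,V$ form a $\kappa$-basis). Thus $\ell_{U_\fkp}(A_\fkp)=m+2$.
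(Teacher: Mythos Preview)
Your argument is correct but proceeds quite differently from the paper's. The paper inverts only $Y_1$, rescales via $Z_i=X_i/Y_1$, $W=V/Y_1$, $T_j=Y_j/Y_1$, and then uses the relation $W^2-\sum_{j\ge 2}T_jZ_j-Z_1=0$ to \emph{eliminate} $Z_1$ entirely; after this substitution the defining ideal becomes the monomial-type ideal $(W^2,Z_2,\dots,Z_m)(Z_2,\dots,Z_m,W)$, whose quotient visibly has length $m+2$ with basis $1,Z_2,\dots,Z_m,W,W^2$. You instead invert all of $D$, pass to the completion $\kappa[[X_1,\dots,X_m,V]]$ via Cohen's structure theorem, bound the length from above by direct monomial reduction, and secure the lower bound by building an explicit $(m{+}2)$-dimensional Artinian $\kappa$-algebra receiving a surjection. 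The paper's elimination trick is slicker once seen and avoids any appeal to completion or coefficient fields; your route is more systematic, makes the role of the $Y_i$ as scalar parameters transparent, and the explicit model gives a clean lower bound without tracking degrees through an inhomogeneous relation. Either way the final count is the same: your basis $1,X_1,\dots,X_m,V$ matches theirs after trading $W^2$ for the linear combination $\sum u_iX_i$.
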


\begin{proof}
Let $\widetilde{k}=k[Y_1, \frac{1}{Y_1}]$ and $\widetilde{U}=U[\frac{1}{Y_1}]$.
We put  $Z_i=\frac{X_i}{Y_1}$ for $1 \leq i \leq m$,
$T_j=\frac{Y_j}{Y_1}$ for $2 \leq j \leq d$, and $W=\frac{V}{Y_1}$.
Then $\widetilde{U}=\widetilde{k}[Z_1,Z_2,\cdots,Z_m,V,T_2,T_3,\cdots,T_d]$
and
$$\a\widetilde{U}=(Z_1,Z_2,\cdots,Z_m){\cdot}(Z_1,Z_2,\cdots,Z_m,W)+(W^2-\sum_{j=2}^dT_jZ_j-Z_1).$$
Because the elements $\{Z_i\}_{1 \leq i \leq m}$, $W$, and $\{T_j\}_{2 \leq j \leq d}$ are algebraically independent over $\widetilde{k}$,
we have
$$ \widetilde{U}/\a \widetilde{U} \cong \overline{U}=
\frac{\widetilde{k}[Z_2,Z_3,\cdots,Z_m,W,T_2,T_3,\cdots,T_d]}{(W^2,Z_2,Z_3,\cdots,Z_m){\cdot}(Z_2,Z_3,\cdots,Z_m,W)},$$
where we substitute $Z_1$ with $W^2-\sum_{j=2}^dT_jZ_j$.
Then the ideal ${\p}\widetilde{U}/K\widetilde{U}$ corresponds to the prime ideal
${P}=(Z_2,Z_3,\cdots,Z_m,W)$.
Thus $\ell_{U_{\p}}(A_{\p})=\ell_{\overline{U}_{P}}(\overline{U}_{P})=m+2$.
\end{proof}

Now we have ${\e}_0(Q)=\ell_{U_{\p}}(A_{\p}) {\cdot} {\e}_0^{A/\fkp A}((Q + \fkp A)/\fkp A)=m+2$ by the associative formula of multiplicity,
because $\fkp=\sqrt{\a}$ and $U/\fkp \cong k[Y_1,Y_2,\cdots,Y_d]$.
On the other hand, $\ell_A(A/Q)=m+2$, since
$$A/Q \cong k[[X_1,X_2,\cdots,X_m,V]]/((X_1,X_2,\cdots,X_m) {\cdot} (X_1,X_2,\cdots,X_m,V)+(V^2)).$$
Hence  ${\e}_0(Q)=\ell_A(A/Q)$, so that  $A$ is a Cohen-Macaulay ring and $\e_0(Q) = m+2$.

\begin{lem}\label{lemmaex}
The following assertions hold true.
\begin{itemize}
\item[$(1)$] ${\m}^2=Q{\m}$, $I^2=QI+(v^2) \neq QI$, and $I^3=QI^2$.
\item[$(2)$] $(a_1,a_2,\cdots,\check{a_i},\cdots,a_d) \cap I^2=(a_1,a_2,\cdots,\check{a_i},\cdots,a_d)I$
            for all $1 \leq i \leq d$.
\item[$(3)$] $(a_{\alpha} \mid  \alpha \in \Gamma) \cap I^n=(a_{\alpha} \mid \alpha \in \Gamma)I^{n-1}$
            for all subsets $\Gamma \subsetneq \{1,2,\cdots,d\}$ and for all integers $n \in {\Z}$.
\item[$(4)$] $(a_1^2,a_2^2,\cdots,a_d^2) \cap I^n=(a_1^2,a_2^2,\cdots,a_d^2)I^{n-2}$
            for all $3 \leq n \leq d+1$.
\end{itemize}
\end{lem}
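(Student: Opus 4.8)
The plan is to realise $A$ as an explicit free module over the power series subring $\calO := k[[a_1, \ldots, a_d]]$, which turns all four assertions into linear algebra over the regular local ring $\calO$. Since $A$ is Cohen--Macaulay of dimension $d$ and $a_1, \ldots, a_d$ is a system of parameters, these form a regular sequence, so $\calO$ is a formal power series ring, $A$ is a finite $\calO$-module, and (as $\dim A = \dim \calO$ with $A$ Cohen--Macaulay) the Auslander--Buchsbaum formula gives $\pd_\calO A = 0$; hence $A$ is $\calO$-free. The residues of $1, x_1, \ldots, x_m, v$ form a $k$-basis of $A/QA$, so they form a free $\calO$-basis
$$A = \calO \oplus \Big( \bigoplus_{i=1}^m \calO x_i \Big) \oplus \calO v,$$
with multiplication governed by $x_i x_j = 0$, $x_i v = 0$, and $w := v^2 = \sum_{i=1}^d a_i x_i$. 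Writing $\n = (a_1, \ldots, a_d)\calO$, one checks from these rules that each ideal below is a direct sum of $\calO$-submodules in the three coordinate blocks $\calO$, $\bigoplus_i \calO x_i$, $\calO v$, so that membership may be tested block by block; for example $I = \n \oplus \mathcal{N} \oplus \calO v$ with $\mathcal{N} = \sum_{\alpha \in \Lambda} \calO x_\alpha + \bigoplus_i \n x_i + \calO w$.

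For $(1)$, $\m^2 = Q\m$ and $I^2 = QI + (v^2)$ are read off from the multiplication table, the only surviving new product being $v^2 = w$; and $v^2 \notin QI$ because the $x_1$-coordinate of $QI = \n I$ lies in $\n^2$ whereas that of $w$ is $a_1 \notin \n^2$. Moreover $v^2 \cdot I \subseteq QI^2$ (as $v^2 x_\alpha = 0$, $v^3 = 0$, and $v^2 a_k \in Q\,(v^2) \subseteq QI^2$), so $I^3 = QI^2$. Feeding $I^3 = QI^2$ back gives, for $n \geq 2$, the block description $I^n = \n^n \oplus \mathcal{N}_n \oplus \n^{n-1} v$ with $\mathcal{N}_n = \sum_{\alpha \in \Lambda} \n^{n-1} x_\alpha + \bigoplus_i \n^n x_i + \n^{n-2} w$, which drives the remaining parts.

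Assertion $(2)$ is the case $n = 2$, $\Gamma = \{1, \ldots, d\} \setminus \{i\}$ of $(3)$, so it suffices to prove $(3)$. I will use the elementary fact that in $\calO$, for $\fkc$ generated by a subset of $\{a_1, \ldots, a_d\}$ one has $\fkc \cap \n^k = \fkc\, \n^{k-1}$ (verified on monomials in $\operatorname{gr}_\n \calO = k[A_1, \ldots, A_d]$). With $\fkc = (a_\alpha \mid \alpha \in \Gamma)\calO$, testing $(a_\alpha \mid \alpha \in \Gamma) \cap I^n = \fkc A \cap I^n$ block by block reduces, via this fact on the $\calO$- and $\calO v$-blocks, to the identity $(\fkc \cdot \bigoplus_i \calO x_i) \cap \mathcal{N}_n = \fkc\, \mathcal{N}_{n-1}$ on the $x$-block. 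The only coupling among $x$-coordinates is the term $\n^{n-2} w$; writing an element as $\delta + rw$ with $r \in \n^{n-2}$ and reading the $x_{i_0}$-coordinate for a fixed $i_0 \in \{1,\ldots,d\} \setminus \Gamma$ (which exists since $\Gamma$ is proper) yields $r a_{i_0} \in \fkc + \n^n$; as $a_{i_0}$ is a nonzerodivisor modulo $\fkc$ this gives $r \in \fkc + \n^{n-1}$, hence $r \in \fkc\, \n^{n-3} + \n^{n-1}$ (small cases handled directly). Splitting $r$ accordingly absorbs $rw$ into $\fkc\, \mathcal{N}_{n-1}$, the $\n^{n-1}w$ piece returning to the diagonal block, which proves $(3)$.

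Assertion $(4)$ is the crux. With $Q^{[2]} = (a_1^2, \ldots, a_d^2)$ the same block reduction (using now $\fkc \cap \n^k = \fkc\, \n^{k-2}$ for $\fkc = (a_1^2, \ldots, a_d^2)\calO$) reduces to controlling the coefficient $r \in \n^{n-2}$ of $w$ in an element $\delta + rw$ of $Q^{[2]} \cap I^n$; the $x_i$-coordinates ($i \leq d$) now force $r a_i \in (a_1^2, \ldots, a_d^2) + \n^n$ for all $i$. Passing to $\operatorname{gr}_\n \calO = k[A_1, \ldots, A_d]$ (and assuming $r \notin \n^{n-1}$, else $r$ is absorbed at once), and using that the initial forms $A_1^2, \ldots, A_d^2$ of the regular sequence $a_1^2, \ldots, a_d^2$ generate its initial ideal, one obtains $\operatorname{in}(r)\, A_i \in (A_1^2, \ldots, A_d^2)$ for every $i$, i.e.
$$\operatorname{in}(r) \in (A_1^2, \ldots, A_d^2) : (A_1, \ldots, A_d) = (A_1^2, \ldots, A_d^2) + (A_1 A_2 \cdots A_d),$$
the last summand being the socle of the Artinian complete intersection $k[A_1, \ldots, A_d]/(A_1^2, \ldots, A_d^2)$. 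Since $\deg \operatorname{in}(r) = n-2$ and the hypothesis $n \leq d+1$ forces $n-2 \leq d-1 < d = \deg(A_1 \cdots A_d)$, the socle term cannot occur, so $\operatorname{in}(r) \in (A_1^2, \ldots, A_d^2)$; lifting gives $r \in (a_1^2, \ldots, a_d^2)\n^{n-4} + \n^{n-1}$, and splitting $r$ as before absorbs $rw$ into $Q^{[2]} I^{n-2}$, establishing $(4)$. I expect this step to be the main obstacle: recognising the colon ideal as the socle of $k[A]/(A_i^2)$ and pinning down that the bound $n \leq d+1$ is exactly what keeps the socle generator $A_1 \cdots A_d$ out of range -- correspondingly, the identity genuinely fails at $n = d+2$, as the element $r = a_1 \cdots a_d$ witnesses. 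The only other place demanding care is the rigorous set-up of the free $\calO$-structure and the verification that each relevant ideal is block-diagonal as asserted, which is routine once the multiplication table $x_i x_j = x_i v = 0$, $v^2 = \sum a_i x_i$ is in hand.
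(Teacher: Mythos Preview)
Your proof is correct and takes a genuinely different route from the paper's.

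The paper argues by direct, elementwise induction inside $A$: assertion~(2) is obtained by writing $\varphi=a_i\rho+v^2\xi$, substituting $v^2=\sum a_jx_j$, and showing $\xi\in\fkm$; assertion~(3) is proved by descending induction on $\sharp\Gamma$ (base case $\sharp\Gamma=d-1$ from~(2), then induction on $n$ using $I^3=QI^2$); and assertion~(4) is handled by choosing a minimal counterexample in $d$, treating $n=d+1$ via an explicit decomposition $I^{d+1}=JI^{d-1}+a_1\cdots a_dI+\sum_i a_1\cdots\check{a_i}\cdots a_d\,v^2A$, and reducing $3\le n\le d$ to $\overline A=A/(a_d)$.

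Your approach instead sets up the free $\calO=k[[a_1,\dots,a_d]]$-module structure $A=\calO\oplus\bigoplus_i\calO x_i\oplus\calO v$ and shows that all the ideals in question split along the three blocks; this converts each intersection statement into an identity in the regular local ring $\calO$, ultimately controlled by the single diagonal-coupling term $w=v^2=\sum a_ix_i$. The payoff is most visible in~(4): your reduction to the colon $(A_1^2,\dots,A_d^2):(A_1,\dots,A_d)=(A_1^2,\dots,A_d^2)+(A_1\cdots A_d)$ in $\operatorname{gr}_\n\calO$ gives a transparent conceptual reason for the bound $n\le d+1$ (the socle generator has degree $d=n-2$ precisely at $n=d+2$), whereas in the paper this bound emerges from a somewhat intricate case split. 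Conversely, the paper's argument is entirely self-contained and avoids invoking the Cohen--Macaulayness of $A$ (needed for $\calO$-freeness) as input to the lemma. One small point worth spelling out more fully in your write-up: the claim that the relevant ideals are ``block-diagonal'' in the three summands is not automatic for arbitrary ideals of $A$ (for instance $(v+x_1)$ is not), so the verification for $I^n$, $(a_\alpha\mid\alpha\in\Gamma)A$, and $(a_1^2,\dots,a_d^2)A$ deserves an explicit sentence.
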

\begin{proof}
$(1)$
It is routine to check that ${\m}^2=Q{\m}$, and $I^2=QI+(v^2)$. We have $I^3=QI^2$, since $v^3 = 0$. 
Let us check that $v^2 \not\in QI$. 
Suppose $v^2 \in QI$ and write 
$$ v^2=\sum_{i=1}^da_ix_i =\sum_{i=1}^d a_i \xi_i$$
with $\xi_i \in I$.
Then $a_d(x_d-\xi_d) \in (a_1,a_2,\cdots,a_{d-1})$ and so $x_d-\xi_d \in (a_1,a_2,\cdots,a_{d-1})$, because $a_1, a_2, \cdots , a_d$ is a regular sequence.
Hence $x_d \in I$ so that  
$X_d \in \a+(Y_1,Y_2,\cdots,Y_d)U+(X_{\alpha} \mid  \alpha \in \Lambda)U+VU$, which is impossible, because $\Lambda \cap \{1, 2, \cdots , d\} =\emptyset$.

$(2)$
Let $ 1 \leq i \leq d$ be an integer and put $Q_i=(a_1,a_2,\cdots,\check{a_i},\cdots,a_d)$.
Then
\begin{eqnarray*}
Q_i \cap I^2 &=&Q_i \cap (QI+(v^2))\\
             &=&Q_i \cap (Q_iI+a_iI+(v^2))\\
             &=&Q_iI+Q_i \cap [a_iI+(v^2)].
\end{eqnarray*}
Let $\varphi \in Q_i \cap (a_iI+v^2A)$ and write $\varphi=a_i \rho +v^2 \xi$
with $\rho \in I$ and $\xi \in A$.
Then
$\varphi=a_i \rho+\sum_{j=1}^da_jx_j \xi=a_i(\rho+x_i \xi)+\sum_{j \neq i}a_jx_j\xi$. Hence $a_i(\rho+x_i \xi) \in Q_i$ and so $\rho +x_i \xi \in Q_i$; thus $x_i \xi \in I$.
Therefore $\xi \in {\m}=I+(x_{\alpha} \mid \alpha \notin \Lambda)$.
Let $\xi={\xi}'+{\xi}''$
with ${\xi}' \in I$ and ${\xi}'' \in (x_{\alpha} \mid \alpha \notin \Lambda)$.
Notice that 
$x_j \xi=x_j({\xi}'+{\xi}'')=x_j\xi'+x_j\xi''=x_j\xi'$ for all $1 \leq j \leq d$, since $x_j\xi" \in (x_1, x_2, \cdots, x_m)^2=(0)$.
Consequently
$\varphi=a_i(\rho+x_i \xi')+\sum_{j \neq i}a_jx_j\xi' \in Q_iI$, since $\xi' \in I$ and $\rho+x_i \xi' = \rho+x_i \xi \in Q_i$.
Thus $Q_i \cap I^2 \subseteq Q_iI$, so that we have $Q_i \cap I^2=Q_iI$.

$(3)$
Let $\tau=\sharp \Gamma$ and we will prove assertion (3) by descending induction on $\tau$.
Suppose that $\tau=d-1$ and let $\Gamma = \{1, 2, \cdots , \check{i}, \cdots , d \}$ with $1 \leq i \leq d$. If $n \leq 2$, assertion (3) is obvious and follows from assertion (2). Assume that $n \geq 3$ and that our assertion holds true for $n-1$.
Then, since $I^3 = QI^2$, we have 
\begin{eqnarray*}
Q_i \cap I^n&=&Q_i \cap QI^{n-1}\\
            &=&Q_i \cap (Q_iI^{n-1}+a_iI^{n-1})\\
            &=&Q_iI^{n-1}+[Q_i \cap a_iI^{n-1}]\\
            &=&Q_iI^{n-1}+a_i[Q_i \cap I^{n-1}].
\end{eqnarray*}
Since $Q_i \cap I^{n-1}=Q_iI^{n-2}$ by the hypothesis of induction on $n$, we get $$a_i[Q_i \cap I^{n-1}]=a_i[Q_iI^{n-2}] \subseteq Q_iI^{n-1}.$$
Thus $Q_i \cap I^n \subseteq Q_i I^{n-1}$ whence $Q_i \cap I^n = Q_i I^{n-1}$.

We now consider the case where $\tau < d-1$. We assume that $n \geq 2$ and our assertion holds true for $n-1$.
Let $\varphi \in (a_{\alpha} \mid  \alpha \in \Gamma) \cap I^n$ and let $\beta \in \{1,2, \cdots,d\}\, \backslash \, \Gamma$. Then 
$$(a_{\alpha} \mid  \alpha \in \Gamma) \cap I^n \subseteq [(a_{\alpha} \mid \alpha \in \Gamma)+(a_{\beta})] \cap I^n
=[(a_{\alpha} \mid \alpha \in \Gamma)+(a_{\beta})]I^{n-1}$$ by the hypothesis on $\tau$. We write $\varphi=\varphi'+a_{\beta} \rho$  with $\varphi' \in (a_{\alpha}| \alpha \in \Gamma)I^{n-1}$ and  $\rho \in I^{n-1}$.
Then  $a_{\beta} \rho \in (a_{\alpha} \mid  \alpha \in \Gamma)$ and so $\rho \in (a_{\alpha}\mid \alpha \in \Gamma) \cap I^{n-1}$, while $(a_{\alpha} \mid  \alpha \in \Gamma) \cap I^{n-1}=(a_{\alpha} \mid \alpha \in \Gamma)I^{n-2}$ by the hypothesis on $n$.
Hence $\rho \in (a_{\alpha} \mid  \alpha \in \Gamma)I^{n-2}$ so that 
$\varphi \in (a_{\alpha} \mid \alpha \in \Gamma)I^{n-1}$.
Thus
$(a_{\alpha} \mid \alpha \in \Gamma) \cap I^n \subseteq (a_{\alpha} \mid \alpha \in \Gamma)I^{n-1}$ as is claimed. 

$(4)$
We put $J=(a_1^2,a_2^2,\cdots,a_d^2)$.
Assume that $J \cap I^n \neq JI^{n-2}$ for some $3 \leq n \leq d+1$ and choose $d$ as small as possible among such counterexamples. Hence $d \geq 2$. Let $\varphi \in J \cap I^n$ such that $\varphi \not\in JI^{n-2}$.

We  begin with the following.

\begin{claim}\label{claimE}
$$I^{d+1}=JI^{d-1}+a_1a_2\cdots a_dI+\sum_{i=1}^da_1a_2 \cdots \check{a_i} \cdots a_d v^2A.$$
\end{claim}

\begin{proof}[Proof of Claim \ref{claimE}]
Since $I^2=QI+(v^2)$ and $I^3=QI^2$,
we have $$I^{d+1}=Q^{d-1}I^2=Q^{d-1}(QI+(v^2))=Q^dI+v^2Q^{d-1}.$$ On the other hand, because $$Q^d = JQ^{d-2} + (a_1a_2\cdots a_d)~~\operatorname{and}~~Q^{d-1} = JQ^{d-3} + \sum_{i=1}^da_1a_2 \cdots \check{a_i} \cdots a_dA,$$ we get 
$$Q^dI=JQ^{d-2}I+a_1a_2 \cdots a_d I \subseteq JI^{d-1}+a_1a_2 \cdots a_dI$$
and 
\begin{eqnarray*}
v^2Q^{d-1} &=& v^2JQ^{d-3}+v^2(\sum_{i=1}^da_1a_2 \cdots \check{a_i} \cdots a_dA) \\
&\subseteq& JI^{d-1} 
+ \sum_{i=1}^d a_1a_2 \cdots \check{a_i} \cdots a_d v^2A,
\end{eqnarray*}
(notice that $v \in I)$.
Hence
$I^{d+1} \subseteq JI^{d-1}+a_1a_2\cdots a_dI+\sum_{i=1}^da_1a_2 \cdots \check{a_i} \cdots a_d v^2A$.
\end{proof}
Suppose that $n=d+1$. Then by Claim \ref{claimE} we may write $$\varphi=\varphi'+a_1a_2 \cdots a_d \eta +\sum_{i=1}^d c_i a_1 a_2 \cdots \check{a_i} \cdots a_d v^2$$
 with $\varphi' \in JI^{d-1}$, $\eta \in I$, and $c_i \in A$. Since  $v^2=\sum_{i=1}^da_ix_i$, we see $$\sum_{i=1}^d c_i a_1 a_2 \cdots \check{a_i} \cdots a_d v^2  \equiv a_1a_2\cdots a_d \left( \sum_{i=1}^dc_ix_i \right)~{\mod}~J$$ whence 
$$a_1a_2 \cdots a_d(\eta+\sum_{i=1}^dc_ix_i) \equiv a_1a_2 \cdots a_d\eta+ \sum_{i=1}^d c_i a_1 a_2 \cdots \check{a_i} \cdots a_d v^2 \equiv 0~{\mod}~ J,$$
because
$$\varphi=\varphi'+a_1a_2 \cdots a_d \eta +\sum_{i=1}^d c_i a_1 a_2 \cdots \check{a_i} \cdots a_d v^2 \in J.$$ 
Hence $\eta+\sum_{i=1}^dc_ix_i \in Q$ because $a_1, a_2, \cdots, a_d$ is a regular sequence in $A$,  so that we have 
$$\sum_{i=1}^dc_ix_i \in I =(a_i \mid 1 \leq i \leq d) + (x_{\alpha} \mid \alpha \in \Lambda) +(v).$$
Because $\{x_i\}_{1 \leq i \leq m}$, $v$, and $\{a_i\}_{1 \leq i \leq d}$ is a minimal basis of the maximal ideal $\m $ of $A$ and $\Lambda \cap \{1, 2, \cdots , d\} = \emptyset$, this forces $c_i \in \m$ for all $1 \leq i \leq d$.
We write $c_i = c_i'+c_i"$ with $c_i' \in Q$ and $c_i" \in (x_1,x_2,\cdots,x_m,v)$. Then, since $(x_1,x_2,\cdots,x_m,v){\cdot}(x_1,x_2,\cdots,x_m)=(0)$, we have $c_i"x_i =0$ and so $$c_ix_i=c_i'x_i+c_i"x_i=c_i'x_i \in Q$$ because  $c_i' \in Q.$
Consequently, since $\eta+\sum_{i=1}^dc_ix_i \in Q$, we have
$$\eta \equiv \eta+\sum_{i=1}^dc_i'x_i = \eta+\sum_{i=1}^dc_ix_i \equiv 0 ~{\mod}~ Q.$$
Hence $\eta \in Q$ and so $$a_1a_2 \cdots a_d \eta \in Q^{d+1}=(a_1^2,a_2^2,\cdots,a_d^2)Q^{d-1} \subseteq JI^{d-1}.$$
On the other hand we have $c_i"v^2=0$ since $c_i" \in (x_1,x_2,\cdots,x_m,v)$, so that $c_iv^2=c_i'v^2+c_i"v^2=c_i'v^2 \in Q^2$ because $c_i', v^2 \in Q$. Hence  
$$c_ia_1a_2 \cdots \check{a_i} \cdots a_dv^2 = a_1a_2 \cdots \check{a_i} \cdots a_d{\cdot}c_i'v^2 \in Q^{d+1} \subseteq JI^{d-1}$$
for all $1 \leq i \leq d$, so that 
$$\varphi=\varphi'+a_1a_2 \cdots a_d \eta +\sum_{i=1}^d c_i a_1 a_2 \cdots \check{a_i} \cdots a_d v^2 \in JI^{d-1}, $$which is a contradiction. Thus $3 \leq n \leq d$.

We put $\overline{A}=A/(a_d)$ and $\overline{I}=I/(a_d)$. For each $x \in A$ let $\overline{x}$ denote the image of $x$ in $\overline{A}$. We then have, by the minimality of $d$, that 
$$(\overline{a_1^2},\overline{a_2^2},\cdots,\overline{a_{d-1}^2}) \cap \overline{I}^n
=(\overline{a_1^2},\overline{a_2^2},\cdots,\overline{a_{d-1}^2})\,\overline{I}^{n-2}$$ for all $3 \leq n \leq d$. Hence $\overline{\varphi} \in (\overline{a_1^2},\overline{a_2^2},\cdots,\overline{a_{d-1}^2})\overline{I}^{n-2}$, so that 
$$\varphi \in (a_1^2,a_2^2, \cdots,a_{d-1}^2)I^{n-2}+[(a_d) \cap I^n].$$
Since  $(a_d) \cap I^n=a_dI^{n-1}$ by assertion $(3)$, we have $\varphi=\varphi'+a_d \xi$ for some $\varphi' \in (a_1^2,a_2^2,\cdots,a_{d-1}^2)I^{n-2}$ and $\xi \in I^{n-1}$; hence $a_d \xi \in J$, because $\varphi, \varphi' \in J$. We write $a_d \xi=\sum_{i=1}^da_i^2 \xi_i$
with $\xi_i \in A$.
Then $a_d(\xi-a_d\xi_d) \in (a_1^2,a_2^2,\cdots,a_{d-1}^2)$, so that $\xi-a_d\xi_d \in (a_1^2,a_2^2,\cdots,a_{d-1}^2)$.
Consequently 
$$\overline{\xi} \in (\overline{a_1^2},\overline{a_2^2},\cdots,\overline{a_{d-1}^2})\cap \overline{I}^{n-1} =(\overline{a_1^2},\overline{a_2^2},\cdots,\overline{a_{d-1}^2})\,\overline{I}^{n-3}$$ by the minimality of $d$.
 Hence 
$$\xi \in (a_1^2,a_2^2,\cdots,a_{d-1}^2)I^{n-3}+[(a_d) \cap I^{n-1}].$$
However, since $(a_d) \cap I^{n-1}=a_dI^{n-2}$ by assertion $(3)$,
we have $$a_d \xi \in a_d(a_1^2,a_2^2,\cdots,a_{d-1}^2)I^{n-3}+a_d^2I^{n-2} \subseteq JI^{n-2},$$
whence $\varphi=\varphi'+a_d\xi \in JI^{n-2}$, which is the required contradiction.
Thus $$J \cap I^n=JI^{n-2}$$ for all $3 \leq n \leq d+1,$ as we wanted.
\end{proof}

We are now in a position to complete the proof of Theorem \ref{ex2}.

\begin{proof}[Proof of Theorem \ref{ex2}.]
We have  $\ell_A(I^2/QI)=1$, since ${\m}v^2 \subseteq QI$  (recall that $I^2 \neq QI$ and $I^2 = QI + (v^2)$ by Lemma \ref{lemmaex} (1)). Because $I^3 = QI^2$, by Theorem \ref{Sally} we have $S \cong B(-1)$ as graded $T$-modules, so that $\e_1 = \e_0 - \ell_A(A/I) + 1$, $\e_2 = 1$ if $d \geq 2$, and $\e_i = 0$ for all $3 \leq i \leq d$ if $d \geq 3$. Because $\ell_A(A/I) = m - \sharp{\Lambda} +1$ and $\e_0 = m + 2$, we get $\e_1 = \sharp{\Lambda} + 2$; hence $\e_1=2$ if $\Lambda = \emptyset$. 

Notice that $G$ is not a Cohen-Macaulay ring. In fact, $Q \cap I^2  \ne QI$ (recall that $I^2 \subseteq Q$ since ${\m}^2 =Q{\m}$). The ring $G$ is Buchsbaum by Lemma \ref{lemmaex} $(1)$, $(2)$, and $(4)$ and \cite[Proposition 9.1]{G} and so the facts that ${\H}_N^{d-1}(G)= [{\H}_N^{d-1}(G)]_{1-d}$ and $\ell_A([{\H}_N^{d-1}(G)]_{1-d})=1$ follow  by induction on $d$ similarly as in the proof of Lemma \ref{buch} and Theorem \ref{MainTheoremD}.
\end{proof}

\addcontentsline{toc}{section}{references}

\end{document}